\documentclass[11pt]{amsart}
\usepackage{amsmath}
\usepackage{amssymb}
\usepackage{amsfonts}
\usepackage{amsthm}
\usepackage{xcolor}
\usepackage{mathrsfs}
\usepackage{verbatim}
\usepackage{float}
\usepackage{graphicx}
\usepackage{tikz-cd}
\usepackage{marvosym}
\usepackage[hidelinks]{hyperref}
\usepackage{ytableau}
\usepackage{stmaryrd}
\usepackage{multicol}
\usepackage{dynkin-diagrams}
\usepackage[shortlabels]{enumitem}

\usepackage[backend=biber,style=alphabetic]{biblatex}
\usepackage{aliascnt}
\usepackage[hidelinks]{hyperref}
\usepackage[capitalize]{cleveref}

\let\oldtheorem\newtheorem
\RenewDocumentCommand{\newtheorem}{s m o m O{}}{%
\IfBooleanTF{#1}%
{\oldtheorem{#2}{#4}}%
{\IfNoValueTF{#3}{\oldtheorem{#2}{#4}[#5]}%
{\newaliascnt{#2}{#3}%
\oldtheorem{#2}[#2]{#4}%
\aliascntresetthe{#2}}}}

\DeclareLabelalphaTemplate{
  \labelelement{
    \field[uppercase,final]{shorthand}
    \field[uppercase]{label}
    \field[strwidth=3,strside=left,ifnames=1]{labelname}
    \field[uppercase,strwidth=1,strside=left]{labelname}
  }
  \labelelement{
    \field[strwidth=4,strside=right]{year}
  }
}

\AtNextBibliography{\small}
\usepackage{fullpage}  
\usepackage{cleveref}
\usetikzlibrary{positioning}

\newcommand{\RR}{\mathbb{R}}
\newcommand{\ZZ}{\mathbb{Z}}
\newcommand{\PP}{\mathbb{P}}
\newcommand{\A}{\mathbb{A}}
\newcommand{\CC}{\mathbb{C}}

\newcommand{\HH}{\mathbb{H}}
\newcommand{\QQ}{\mathbb{Q}}

\newcommand{\FF}{\mathbb{F}}

\DeclareMathOperator{\Bri}{Bri}
\DeclareMathOperator{\Kle}{Kle}
\DeclareMathOperator{\Ham}{Ham}

\DeclareMathOperator{\Mod}{Mod}

\DeclareMathOperator{\Aut}{Aut}

\DeclareMathOperator{\PGL}{PGL}
\DeclareMathOperator{\PSL}{PSL}
\DeclareMathOperator{\SL}{SL}

\DeclareMathOperator{\Hom}{Hom}

\DeclareMathOperator{\Spec}{Spec}

\DeclareMathOperator{\Pic}{Pic}
\DeclareMathOperator{\Poly}{Poly}

\newtheorem{theorem}{Theorem}[section]
\newtheorem{proposition}[theorem]{Proposition}
\newtheorem{lemma}[theorem]{Lemma}
\newtheorem{corollary}[theorem]{Corollary}

\theoremstyle{definition}
\newtheorem{definition}[theorem]{Definition}
\newtheorem{problem}[theorem]{Problem}

\newtheorem{example}[theorem]{Example}
\newtheorem{remark}[theorem]{Remark}

\numberwithin{equation}{subsection}

\title{The geometry of solutions to the general sextic}
\author{Sidhanth Raman}
\newcommand{\Addresses}{{
  \bigskip
  \footnotesize
  \textsc{Department of Mathematics, University of Chicago}\par\nopagebreak
  \textit{E-mail address}:  \texttt{svraman@uchicago.edu} 
}}

\begin{document}
\begin{abstract}
    We study the geometry of the three simplest known formulas for roots of a general sextic polynomial. In contrast with work of Green on the quintic, we show that no ``simultaneous uniformization" of the normal forms associated to the solutions exists. This demonstrates an intrinsic qualitative complexity of sextic equations that is not present in quintic equations. Additionally, we give a formula for the roots of a general sextic in terms of Hilbert modular cusp forms and the uniformizing differential equation of the associated Hilbert modular surface.
\end{abstract}
\maketitle

\section{Introduction}

In this article, we study the geometry of three projective surfaces endowed with a faithful $A_6$-action. Our interest in these varieties stems from their role in the 19th century theory of formulas: they are central to the three simplest known algebraic formulas for the roots of a general sextic polynomial. Let $\sigma_k$ denote the $k$th elementary symmetric polynomial in $6$ variables. The projective surfaces are defined as follows:

\begin{enumerate}
    \item The \emph{Hamilton surface} is the complete intersection $V_{\Ham} = Z(\sigma_1,\sigma_2,\sigma_3) \subset\PP^5$.
    \item The \emph{Klein projective plane} is $V_{\Kle} = \PP^2$.
    \item The \emph{modular surface} is the complete intersection $V_{\Mod} = Z(\sigma_1,\sigma_2,\sigma_4) \subset \PP^5$.\footnote{This surface earns its name because $V_{\Mod}$ is birational to a certain Hilbert modular surface with level structure. We will describe this space in the subsequent sections.}
\end{enumerate}

In \Cref{Section3}, we will see that $V_{\Ham}$ is a K3 surface and that $V_{\Mod}$ is a surface of general type. Since they are defined using symmetric polynomials, the Hamilton and modular surfaces come equipped with a natural $A_6$-action given by permuting coordinates on each surface.\footnote{In fact, the modular and Hamilton surfaces come equipped with a natural $S_6$-action.} The $A_6$-action on $\PP^2$, called the \emph{Valentiner action}, is more subtle to define --- we give explicit generators for this action in \Cref{KleinsApproach}. One of our main results is concerned with the existence of $A_6$-equivariant maps between these surfaces: 

\begin{theorem}[Nonexistence of equivariant maps]\label{MainTheoremOnMaps}
    There are no $A_6$-equivariant generically finite dominant rational maps between any of the projective $A_6$-surfaces $V_{\Ham}$, $V_{\Kle}$, or $V_{\Mod}$.
\end{theorem}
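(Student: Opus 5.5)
We will rule out each of the six ordered pairs using three birational/equivariant invariants: Kodaira dimension, the existence of fixed points of subgroups, and the equivariant birational type.

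\emph{Maps out of the Klein plane.} A generically finite dominant rational map $V_{\Kle}=\PP^2 \dashrightarrow Y$ forces $Y$ to be unirational, hence rational because $Y$ is a surface (Castelnuovo). Neither $V_{\Ham}$ (a K3 surface, with $\kappa=0$) nor $V_{\Mod}$ (of general type) is rational, so there are no maps from $V_{\Kle}$ to either.

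\emph{Maps into lower or equal Kodaira dimension.} For a generically finite dominant map $X\dashrightarrow Y$ we have $\kappa(X)\ge\kappa(Y)$. This rules out $V_{\Ham}\dashrightarrow V_{\Mod}$, since $0<2$. The remaining cases are $V_{\Mod}\dashrightarrow V_{\Ham}$, $V_{\Mod}\dashrightarrow V_{\Kle}$ and $V_{\Ham}\dashrightarrow V_{\Kle}$. None of these can be excluded by Kodaira dimension or by rationality alone, so the equivariance must be used.

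\emph{Fixed-point obstruction.} For maps into $\PP^2$, the tool I would use first is the following. If $H\le A_6$ is an abelian subgroup (or, more generally, any subgroup) that fixes a smooth point of the source, then after an equivariant resolution the image of that point gives an $H$-fixed point in a smooth projective model of the target. For abelian $H$ (Kollár--Szabó, or Reichstein--Youssin), the existence of an $H$-fixed point on one smooth projective model implies it on all models in the equivariant birational class. This applies here because the rational map restricted to a neighbourhood of the point extends after blowing up, and the fixed locus of an abelian group on a smooth complete target is nonempty whenever the source has one.

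So we pick a subgroup $H\cong(\ZZ/3)^2$ or $(\ZZ/2)^2$ in $A_6$ and check two things:
\begin{itemize}
\item $H$ has a fixed point on $V_{\Ham}$ and on $V_{\Mod}$. This follows from a coordinate computation, for example via the points of $\PP^5$ whose coordinates are permuted by $H$ only up to the appropriate characters, intersected with the defining equations.
\item $H$ has no fixed point on $\PP^2$ under the Valentiner action. The preimage of $(\ZZ/3)^2$ in the triple cover $3.A_6\subset \SL_3$ is a Heisenberg group acting irreducibly on $\CC^3$, so it has no common eigenvector.
\end{itemize}
This kills $V_{\Ham}\dashrightarrow V_{\Kle}$ and $V_{\Mod}\dashrightarrow V_{\Kle}$.

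\emph{The K3 target.} For $V_{\Mod}\dashrightarrow V_{\Ham}$ I would use the same fixed-point method, now with a subgroup having fixed points on $V_{\Mod}$ but none on any smooth model of $V_{\Ham}$. Since a K3 surface is its own unique minimal model, an equivariant map lands on $V_{\Ham}$ itself (up to the minimal resolution of its nodes), so we only need to check the fixed loci there. Alternatively, I would use the action on the holomorphic $2$-form. An equivariant dominant map $f$ gives $f^*\omega_{\Ham}\ne 0$, a $2$-form on $V_{\Mod}$ spanning a one-dimensional $A_6$-stable subspace. Because $A_6$ is perfect, this subspace carries the trivial character, so $H^0(K_{V_{\Mod}})$ must contain the trivial $A_6$-representation. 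One can then compute $H^0(K_{V_{\Mod}})=H^0(\mathcal O(1+2+4-6))=H^0(\mathcal O(1))$ restricted to the surface. As a representation this is the $5$-dimensional quotient of the permutation representation by $\sigma_1$, twisted by the sign character, which is trivial on $A_6$. It has no invariants, which gives the contradiction.

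The main obstacle is the fixed-point verification for the maps into $\PP^2$. In particular, we have to find concrete fixed points of the chosen $H$ on $V_{\Ham}$ and $V_{\Mod}$, and confirm that they are smooth points, since the fixed-point lemma needs a smooth fixed point on the source.
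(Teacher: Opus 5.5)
Your handling of maps out of $V_{\Kle}$, of $V_{\Ham}\dashrightarrow V_{\Mod}$, and of $V_{\Mod}\dashrightarrow V_{\Ham}$ is sound; the $2$-form argument there is the paper's Schur's lemma argument. For $V_{\Mod}\dashrightarrow V_{\Kle}$, going-down works for $P=\langle(1\,2\,3),(4\,5\,6)\rangle$ with the smooth fixed point $[1:\omega:\omega^2:0:0:0]\in\tau_{1,2,4}$ ($\omega=e^{2\pi i/3}$) and the Heisenberg lift on $\PP^2$. That is a valid and more direct substitute for the paper's Amitsur argument, which uses the same point only to get $\mathrm{Am}(\widetilde{\tau}_{1,2,4},P)=0$.

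\textbf{The gap is $V_{\Ham}\dashrightarrow V_{\Kle}$.} Your claim that $H$ has a fixed point on $V_{\Ham}$ is false for both choices, and no abelian subgroup can repair it.
\begin{itemize}
\item Since $A_6$ is perfect, it acts trivially on the line $H^{2,0}(V_{\Ham})$, i.e.\ symplectically. The stabilizer of any point therefore acts faithfully on the tangent plane preserving a nondegenerate $2$-form, so it is a finite subgroup of $\SL_2(\CC)$. Abelian such subgroups are cyclic.
\item Concretely, the $P$-fixed points of $\PP^5$ on $Z(\sigma_1,\sigma_2)$ are $[1:\omega^{\pm1}:\omega^{\mp1}:0:0:0]$ and their analogues on the last three coordinates. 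At all of them $\sigma_3=1\neq0$, so they lie on $\tau_{1,2,4}$ but not on $\tau_{1,2,3}$. The point $[1:1:1:-1:-1:-1]$ is excluded since there $\sigma_2=-3$.
\item This also contradicts your own K3-target paragraph, which needs a subgroup with no fixed point on $V_{\Ham}$.
\item On the $\PP^2$ side, every cyclic subgroup fixes a point, because its projective action lifts to $\mathrm{GL}_3(\CC)$.
\item The same holds for $(\ZZ/2\ZZ)^2$. The obstruction class of a $3$-dimensional projective representation is $3$-torsion, while $H^2((\ZZ/2\ZZ)^2,\CC^*)$ is $2$-torsion. So your Heisenberg argument covers only $(\ZZ/3\ZZ)^2$.
\end{itemize}
Going-down is available only for abelian groups, and by it every abelian subgroup with a fixed point on any smooth model of $V_{\Ham}$ is cyclic, hence also fixes a point of $V_{\Kle}$. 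Local fixed-point data therefore cannot separate these two surfaces.

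What is missing is an invariant that detects the non-linearizability of the Valentiner action globally rather than at a point. The paper uses the Amitsur group.
\begin{enumerate}
\item A symplectic action is trivial on the transcendental lattice, and $H^2(V_{\Ham},\ZZ)^{A_6}$ has rank $3$. Hence $\Pic(V_{\Ham})^{A_6}=\ZZ\cdot H$, where $H$ is the hyperplane class.
\item $\mathcal{O}(1)$ is visibly linearizable, so $\mathrm{Am}(V_{\Ham},A_6)=0$.
\item In contrast, $\mathrm{Am}(V_{\Kle},A_6)=\ZZ/3\ZZ$.
\item After equivariantly resolving indeterminacy and using birational invariance of $\mathrm{Am}$, an equivariant dominant map to $\PP^2$ pulls back $\Pic$ injectively. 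It would therefore inject $\ZZ/3\ZZ$ into $0$, a contradiction.
\end{enumerate}
You need an argument of this kind to close this case.
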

\begin{remark}
    The nonexistence of any dominant rational map $\PP^2 \to V_{\Ham}$, $\PP^2 \to V_{\Mod}$, or $V_{\Ham} \to V_{\Mod}$ is elementary to see: Kodaira dimension $\kappa$ is non-increasing under dominant morphisms between surfaces, and the results of \Cref{Section3} imply that $\kappa(V_{\Mod}) = 2$, $\kappa(V_{\Ham}) = 0$, and $\kappa(V_{\Kle}) = -\infty$. To prove \Cref{MainTheoremOnMaps}, it suffices to obstruct the existence of equivariant maps between these surfaces when the Kodaira dimension drops.
\end{remark}

To prove \Cref{MainTheoremOnMaps}, we will apply tools from Hodge theory, $A_6$-representation theory, and equivariant birational geometry. When we interpreted this statement in the context of formulas for the roots of the general sextic, we found \Cref{MainTheoremOnMaps} rather surprising. To provide further context and motivation, we now discuss previous work of Green on the quintic \cite{green1978analytic} which inspired this article.

\subsection{Formulas for the roots of a quintic}
The Abel--Ruffini theorem shows that the general quintic polynomial cannot be solved by radicals. However, there are two known formulas for the roots of the general quintic in algebraic functions of one variable: Bring solvably reduced a general quintic to a single variable form using Tschirnhaus transformations \cite{bring1786,jerrard1858essay}, and Klein leveraged the icosahedral action of $A_5$ on $\PP^1$ to reduce the general quintic to a form which could be solved using the single variable icosahedral function \cite{klein1884vorlesungen,klein1958lectures}. Associated to each formula is its \emph{normal form} --- we postpone formalizing the notions of a formula and its normal form until Definitions \ref{formulaDef} and \ref{NormalFormDefinition}. One can informally think of a normal form as the variety whose closed points are in bijective correspondence with sets of ordered roots obtained using the chosen formula.

Klein found, and Green reproduced, a $3$-sheeted $A_5$-equivariant dominant rational map from the Bring normal form $V_{\Bri}$ (a genus $4$ curve) to the Klein normal form (the projective line $\PP^1$), thereby inducing a map from Bring's solution to Klein's solution \cite{green1978analytic}. Moreover, Green uniformized both $V_{\Bri}$ and $\PP^1$ by the upper half plane $\HH^2$, recovering Hermite's analytic solution to the quintic by elliptic modular functions \cite{hermite1858sulla}. Concretely, this $3$-sheeted $A_5$-equivariant map found by Klein and Green gives an algebraic transformation that converts Bring's formula for the roots of a quintic into Klein's formula.

\subsection{Formulas for the roots of a sextic}

Moving up one degree, there are three known solutions to the general sextic polynomial:

\begin{enumerate}
    \item Hamilton provided a straightforward generalization of Bring's quintic solution to the sextic case using Tschirnhaus transformations \cite{hamilton1836inquiry}. This formula's normal form is the surface $V_{\Ham}$.
    \item Klein outlined a generalization of his method for solving quintics to the sextic case by replacing the icosahedral $A_5$-action on $\PP^1$ with the exceptional Valentiner $A_6$-action on $\PP^2$ \cite{klein1905auflosung,sutherland2019felix}. This formula's normal form is the surface $V_{\Kle}$.
    \item Quite recently, Farb, Kisin, and Wolfson gave an alternate solution to the general sextic using Tschirnhaus transformations that differ from Hamilton \cite{farb2023modular}; we call this the \emph{modular solution}. This formula's normal form is the surface $V_{\Mod}$.
\end{enumerate}
These surfaces are used to build formulas for the roots of sextics out of algebraic functions of at most $2$ variables. We give a modern overview of each formula in \Cref{Section2}. In light of Green's work on the quintic, it is natural to ask how these normal forms, and their associated formulas for the roots of sextics, compare and relate to each other. We reformulate \Cref{MainTheoremOnMaps} in the context of solutions to the general sextic, and give its restatement below:

\begin{theorem}[Incompatibility of sextic solutions]\label{MainTheorem}
    There are no $A_6$-equivariant generically finite dominant rational maps between any of the normal forms defining solutions to the general sextic polynomial. Thus there is no algebraic transformation from one formula for the roots of a sextic into any other formula.
\end{theorem}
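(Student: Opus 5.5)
The plan is to first identify, using the descriptions of the three formulas in \Cref{Section2}, the normal forms of Hamilton's, Klein's and the modular solution with the $A_6$-surfaces $V_{\Ham}$, $V_{\Kle}=\PP^2$ and $V_{\Mod}$. By \Cref{formulaDef} and \Cref{NormalFormDefinition}, an algebraic transformation between two formulas is exactly an $A_6$-equivariant generically finite dominant rational map between their normal forms. This reduces \Cref{MainTheorem} to \Cref{MainTheoremOnMaps}. By the Remark following \Cref{MainTheoremOnMaps}, Kodaira dimension already rules out maps that increase $\kappa$. So it remains to rule out equivariant maps $V_{\Mod}\dashrightarrow V_{\Ham}$, $V_{\Mod}\dashrightarrow \PP^2$ and $V_{\Ham}\dashrightarrow\PP^2$.

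\textbf{The case $V_{\Mod}\dashrightarrow V_{\Ham}$ (Hodge theory).} Both surfaces are smooth complete intersections; I am assuming the smoothness established in \Cref{Section3}. A generically finite dominant rational map $f$ extends over codimension two, so $f^*$ gives an injective, $A_6$-equivariant map $H^0(V_{\Ham},K)\hookrightarrow H^0(V_{\Mod},K)$.
\begin{itemize}
\item On the K3 surface, $H^0(K)$ is one-dimensional. Since $A_6$ is perfect, this line is the trivial representation.
\item On $V_{\Mod}$, adjunction gives $K\cong\mathcal O(1+2+4-6)=\mathcal O(1)$.
\item The residue description of the canonical form multiplies the action by the determinant of the permutation action, which is trivial on $A_6$.
\item Hence $H^0(V_{\Mod},K)\cong H^0(\mathcal O_{\PP^5}(1))/\langle\sigma_1\rangle$, which is the $5$-dimensional standard irreducible representation and has no invariants.
\end{itemize}
An equivariant injection of the trivial line into this representation is therefore impossible.

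\textbf{Maps to $\PP^2$ (linearization).} Suppose $f\colon X\dashrightarrow\PP^2$ is equivariant, with $X=V_{\Ham}$ or $V_{\Mod}$. First resolve $f$ equivariantly by blowing up $A_6$-orbits of points, giving $\tilde X\to X$ and $\tilde f$. Set $L=\tilde f^*\mathcal O(1)$; it is $A_6$-invariant and $f$ is defined by a $3$-dimensional $A_6$-stable subspace $W\subset H^0(\tilde X,L)$. I then claim that $L$ admits an $A_6$-linearization. Writing $L=aH-\sum_j b_jE_j$, invariance forces the $b_j$ to be constant on orbits. Orbit sums of exceptional divisors carry canonical linearizations, and $H$ is linearized by the permutation action.

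The key input is that $\mathrm{Pic}(X)^{A_6}$ is $\ZZ H$ (possibly up to finite index). I would prove this by computing the character of $A_6$ on $H^2(X,\QQ)$ via Lefschetz fixed-point counts, and then intersecting the invariant part with $H^{1,1}$. Since $X$ is simply connected, $\mathrm{Pic}$ is torsion-free, and this pins down $L$ exactly.

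Given the linearization, $A_6$ acts linearly on $W$, so $f$ is equivariant for a linear $3$-dimensional representation of $A_6$. But $A_6$ has no nontrivial irreducible representations of dimension $\le 3$. So $W$ is trivial, and $A_6$ would act trivially on the image $\PP^2$. This contradicts faithfulness of the Valentiner action, which only lifts to $3.A_6$.

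\textbf{Main obstacle.} I expect the hardest step to be the linearizability of $L$, i.e.\ showing the obstruction class in $H^2(A_6,\CC^*)\cong\ZZ/6$ vanishes. This needs control of the invariant Picard lattice, especially for the K3 surface $V_{\Ham}$, whose Picard rank may be large. If the invariant Picard group turns out to be bigger than $\ZZ H$, I would fall back on checking linearizability of each invariant class directly. One way is to show each such class is a combination of $H$ and $A_6$-orbit sums of curves. Another is to compare the lifted $3.A_6$-action on $H^0(L)$ against the characters computed on cohomology.
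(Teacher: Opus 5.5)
There is a genuine gap, and it is in the case $V_{\Mod}\dashrightarrow\PP^2$. Your other steps hold up. The reduction to the three remaining directions and the $V_{\Mod}\dashrightarrow V_{\Ham}$ argument match the paper (no equivariant map from the trivial $H^{2,0}$ line into the $5$-dimensional standard representation). Your linearization argument for $V_{\Ham}\dashrightarrow\PP^2$ is the paper's Amitsur-group argument written concretely.

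For $V_{\Mod}$, the premise fails: $\tau_{1,2,4}$ is not smooth, since it has $30$ nodes (\Cref{HodgeDiamond}). This is harmless for the $H^0(K)$ argument, because nodes are canonical singularities and $H^0(\widetilde V_{\Mod},K)=H^0(\omega_{\tau_{1,2,4}})=H^0(\mathcal O(1))$. But it breaks your key input. On the resolution, the orbit sum of the $30$ exceptional $(-2)$-curves is an invariant class not in $\QQ H$, so $\Pic(\widetilde V_{\Mod})^{A_6}\neq\ZZ H$, even up to finite index. You would also still have to control whatever other invariant classes live in a $52$-dimensional $H^{1,1}$. More fundamentally, knowing $\Pic^{A_6}$ only up to finite index gives nothing, because the linearization obstruction lives exactly in that finite index. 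On $\PP^2$ itself, $\Pic^{A_6}=\ZZ\cdot\mathcal O(1)$ while the linearizable classes form $3\ZZ$. Torsion-freeness of $\Pic$ says nothing about divisibility of $H$ or of orbit sums. Your fallback would therefore need the exact integral invariant Picard lattice of the resolved Hilbert modular surface, including its saturation at $3$. You do not supply this, and it is a hard computation.

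The missing idea avoids any lattice computation. Only the $3$-primary part of the obstruction matters, since the class of $\mathcal O_{\PP^2}(1)$ in $H^2(A_6,\CC^*)\cong\ZZ/6\ZZ$ has order $3$. Restriction to a Sylow $3$-subgroup $P=\langle(1\,2\,3),(4\,5\,6)\rangle$ is injective on $3$-primary cohomology.
\begin{enumerate}
\item The point $[1:\omega:\omega^2:0:0:0]$, with $\omega=e^{2\pi i/3}$, lies on $\tau_{1,2,4}$, is fixed by $P$, and is smooth (a Jacobian check).
\item The fixed point persists on any equivariant blow-up, since $P$ is abelian and acts diagonalizably on the tangent space.
\item Any $P$-invariant line bundle is then $P$-linearizable: the central extension of $P$ by $\CC^*$ acts on the fibre at the fixed point, which splits it.
\end{enumerate}
In your language, $L$ is $P$-linearizable, so $W$ would be a linear representation of the abelian group $P$ lifting the Valentiner $P$-action on $\PP^2$. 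That contradicts the fact that the preimage of $P$ in $3.A_6$ is nonabelian. This is the paper's \Cref{cor:AmTrivial}.

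One smaller fix for $V_{\Ham}$. Your Lefschetz count gives rank $3$ for $H^2(V_{\Ham},\ZZ)^{A_6}$, hence rank $1$ for $\Pic^{A_6}$. To get exactly $\ZZ H$, use evenness of the K3 lattice rather than torsion-freeness: if $H=kM$, then $M^2=6/k^2$ must be an even integer, which forces $k=1$.
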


We now describe the content of each section of this paper. In \Cref{Section2}, we give an overview of the theory of Tschirnhaus transformations, and formally define algebraic formulas and their normal forms. We also describe each solution to the general sextic, and the tower of operations required to define their respective formulas. In \Cref{Section3}, we discuss the geometry of each formula's normal form, and then we obstruct the existence of equivariant maps between them in \Cref{Section4}.

In \Cref{Section5}, we give a coda to Green's uniformization of the Bring and Klein normal forms by explaining how one can use the uniformization of $V_{\Mod}$ by $\HH^2 \times \HH^2$ to analytically solve sextic polynomials using Hilbert modular forms. This expands on prior work of Farb--Kisin--Wolfson \cite[Proposition 4.14 (3)]{farb2023modular}, and offers a formula for the roots of a general sextic in terms of the uniformizing differential equation of the Hilbert modular surface $V_{\Mod}$.

\subsection{Acknowledgements} 
I would like to acknowledge the contributions of Alex Sutherland to this work. Alex introduced me to this problem early in graduate school, and although his efforts shaped the foundations of this work, he has declined to be a coauthor. I thank him for all he has taught me about resolvent problems. I thank Claudio G\'omez-Gonz\'ales and Jesse Wolfson for comments and corrections on earlier drafts, and for insightful conversations over the years. I thank Patrick Brosnan, Steven Creech, Trent Lucas, and Dan Minahan for helpful discussions. Finally, I thank Thomas Brazelton and Benson Farb for their careful review and numerous suggestions which made this article more readable. This work was supported in part by NSF Grants No. DMS-1944862 and DMS-2503485.

\section{Solving generic sextics}\label{Section2}

In this section, we will describe how the three algebraic solutions to the general sextic equation work, and write down the tower that describes each formula. Before describing the three known methods to solve for the roots of a general sextic, we will introduce some fundamental ideas, such as the definition of a formula, and various facts regarding Tschirnhaus transformations, as they are used in all three approaches.

\begin{definition}\label{branchedCoverDefinition}
    A rational map of irreducible varieties $f: Y\to X$ is \emph{dominant} if the image of $f$ is Zariski dense in $X$; $f$ is \emph{generically finite} if the generic fiber is a finite set. A \emph{branched cover} of varieties $f: Y\to X$ is a generically finite dominant rational map.
\end{definition}

\begin{definition}[Formula for a cover]\label{formulaDef}
Let $Y\to X$ be a branched cover of $K$-varieties. A \emph{formula} for $Y \to X$ in at most $d$ variables is a tower of branched covers
$$E_k \to E_{k-1} \to \dots \to E_1 \to E_0 \subseteq X$$
along with primitive elements of the function field extensions $\alpha_i\in K(E_{i})/K(E_{i-1})$ such that 
\begin{enumerate}
    \item $E_0 \subseteq X$ is a dense Zariski open,
    \item the total composition $E_k \to E_0$ factors through $Y\to X$, and
    \item for each morphism $\pi_i : E_{i+1} \to E_i$, there exists a branched cover $\tilde{Z}_i \to Z_i$ with $\dim(Z_i) \leq d$, a Zariski open $U_i \subseteq E_i$, and a morphism $E_i \to Z_i$ such that $E_{i+1}|_{U_i} \cong U_i \times_{Z_i} \tilde{Z}_i$.
\end{enumerate}
\end{definition}

\begin{definition}[Normal form]\label{NormalFormDefinition}
    Given a formula for a branched cover of varieties $Y\to X$ as in \Cref{formulaDef}, the \emph{normal form} associated to this formula is the variety $\tilde{Z}_{k-1}$.\footnote{One can define a normal form of a cover to be an $\mathcal{E}$-versal variety of minimal known dimension, where $\mathcal{E}$ is a chosen class of accessory irrationalities --- in this article, we let $\mathcal{E}$ be the class of solvable covers. We do not use the language of versality in the remainder of this paper, and refer to \cite[Definition 3.8]{farb2020resolvent} and \cite[Definitions 4.1 and 4.4]{farb2023modular} for more details.}
\end{definition}

\begin{example}[Cardano's formula]\label{CardanoExample}
    Let $\Poly_3'$ be the space of reduced cubics $\alpha(z)=z^3 + p z + q$, and let $\widetilde{\Poly}_3'$ be the space of tuples $\{(\alpha(z),\lambda_1,\lambda_2,\lambda_3 )\in \Poly_3' \times \CC^3: \alpha(\lambda_1) = \alpha(\lambda_2) =\alpha(\lambda_3) = 0\}$. We realize Cardano's formula for the roots of a cubic as a tower of pullback covers:
    \begin{center}
    \begin{tikzcd}
\widetilde{\Poly}_3' \arrow[rrrdd, bend right] & \PP^1 \arrow[d, "z\mapsto z^3"'] &  & E_2 \arrow[d] \arrow[lll, "{\eta-\frac{p}{3\eta} \mapsfrom (p,q,\delta,\eta)}"', bend right] \arrow[ll] &  &                                 \\
                                               & \PP^1                            &  & E_1 \arrow[d] \arrow[ll, "{-\frac{q}{2} + \delta\mapsfrom (p,q,\delta)}"] \arrow[rr]                    &  & \PP^1 \arrow[d, "z\mapsto z^2"] \\
                                               &                                  &  & \Poly_3' \arrow[rr, "{(p,q)\mapsto \frac{p^3}{27} +\frac{q^2}{4}}"']                                    &  & \PP^1                          
\end{tikzcd}
\end{center}
Following the conventions established in \Cref{NormalFormDefinition}, the normal form of Cardano's formula is $\tilde{Z}_1 = \PP^1$, which comes with the $\ZZ/3\ZZ$-action generating a cube root (branched) cover of $Z_1 = \PP^1$. 
\end{example}

\begin{remark}[On labeling maps and primitive elements]
    In \Cref{CardanoExample}, we have slightly abused notation when labeling the map $E_2 \to \Poly_3'$ to have image the root $\lambda = \eta - \frac{p}{3\eta}$ --- the other roots of $\alpha(z) = z^3 +pz + q$ are implicit, and are obtained by multiplying the cube root $\eta$ by a third root of unity.\footnote{Since the cubic $\alpha(z)$ is reduced, any two roots determine the third by the relation $\lambda_3 = -\lambda_1 - \lambda_2$.} As writing down full towers of pullback squares is quite notationally dense, we will often suppress the explicit cover we pull back along and instead label the map $E_i \to E_{i-1}$, e.g. the label $\sqrt{\Delta}$ will denote the square root of the discriminant.

    Let us also explain the purpose of picking primitive elements $\alpha_i$ for each cover $E_i \to E_{i-1}$ which appear in a formula for $Y \to X$. By construction, the branched cover $E_k \to Y$ will be a rational function $R \in K(X)(\alpha_1,\dots,\alpha_k)$. The primitive elements allow us to explicitly name such a map. In \Cref{CardanoExample}, the primitive elements are a distinguished square root of the discriminant, denoted by $\delta$, and a distinguished cube root of $-\frac{q}{2} +\delta$, denoted by $\eta$.
\end{remark}

Let $K_n = \CC(c_1,\dots,c_n)$ denote the purely transcendental extension of $\CC$ with transcendence basis $c_1,\dots,c_n$. A \emph{general degree $n$ polynomial} is the element of the polynomial ring 
$$p_n(z) = z^n + c_1 z^{n-1} + \dots+ c_{n-1}z + c_n \in K_n[z].$$
We will denote the roots of the general polynomial $p_n(z)$ by the indeterminants $z_1,\dots,z_n$.

Recall that the $S_n$-cover $\Spec(\CC(z_1,\dots,z_n)) \to \Spec(\CC(c_1,\dots,c_n))$ maps the $n$ indeterminants $z_1,\dots,z_n$ to the polynomial whose roots are $z_1,\dots, z_n$; that is, $c_k = \sigma_k(z_1,\dots,z_n)$, where $\sigma_k$ is the $k$th elementary symmetric polynomial. This is the central map we are interested in constructing formulas for. We now describe one of the main techniques used to construct formulas for this class of covers. 

\begin{definition}
    A \emph{Tschirnhaus transformation} $\Upsilon$ is an isomorphism of $K_n$-fields
    $$\Upsilon: K_n[z]/(p_n(z)) \cong K_n[z]/(q_n(z)),$$
    where $q_n(z) = z^n + b_1z^{n-1} + \dots + b_{n-1}z + b_n$ and the coefficients $b_i$ are polynomials in the coefficients $c_i$. A Tschirnhaus transformation $\Upsilon$ has \emph{type} $(i_1,\dots,i_k)$ if $b_{i_1} =\dots = b_{i_k} = 0$.
\end{definition}

As explained by Sutherland \cite{sutherland2021upper} and Wolfson \cite{wolfson2021tschirnhaus}, the primitive element theorem implies that the (extended) space of Tschirnhaus transformations is $\A_{K_n}^n$.\footnote{The adjective ``extended" here simply refers to the fact that there are some trivial Tschirnhaus transformations we wish to ignore along the first coordinate's affine line.} Since we only need to consider Tschirnhaus transformations up to re-scaling, we instead consider the projectivized Tschirnhaus transformation space $\mathcal{T}_{K_n} = \PP_{K_n}^{n-1}$. 

\begin{definition}
    A type $(i_1,\dots,i_k)$ \emph{Tschirnhaus complete intersection} $\tau_{i_1,\dots,i_k} \subseteq \mathcal{T}_{K_n}$ is the complete intersection $\tau_{i_1,\dots,i_k} = Z(b_{i_1}, b_{i_2},\dots, b_{i_k})$. 
\end{definition}

Let us explain how Tschirnhaus complete intersections are useful for extracting roots from polynomials. Suppose we are given a type $(1,\dots,d)$ Tschirnhaus transformation $\Upsilon$ of $p_n(z)$. Then we need only find the roots of the algebraic function
$$q_n^d(z) = z^n +b_{d+1}z^{n-d-1} +\dots + b_{n-1}z+b_n,$$
as $\Upsilon^{-1}$ takes the roots of $q_n^d(z)$ to the roots of $p_n(z)$. Furthermore, by considering the transformation up to projective equivalence, we cut down one independent variable when solving for the roots of $q_n^d(z)$. In other words, $\Upsilon$ determines a formula for the roots of $p_n(z)$ in $n-d - 1$ variables. Finally, to ensure non-triviality of the Tschirnhaus transformations $\Upsilon$ used, we will want to avoid the point $[1:0:\dots:0] \in \tau_{i_1,\dots,i_k}$. A contemporary treatment of Tschirnhaus transformations is given in \cite[Section 3]{wolfson2021tschirnhaus}.

When building a formula for a branched cover $X\to Y$, not every step in the tower $E_i \to E_{i-1}$ reduces the Galois group of $X\to Y$. Such transformations are called \emph{accessory irrationalities}.\footnote{The notion of an accessory irrationality appeared across Klein's corpus, see for example \cite{klein1884vorlesungen,klein1958lectures}. As far as I know, the first formal definition was given in \cite[Definition 4.1]{farb2023modular}.} When solving for the roots of a general polynomial $p_n(z)$, one example of an accessory irrationality is a type $(1,2)$ Tschirnhaus transformation, which solves an auxiliary quadratic to cancel the coefficients $c_1 = c_2 = 0$. A non-example is the square root of the discriminant $\sqrt{\Delta}$, which cuts the Galois group down from $S_n$ to $A_n$; this step will appear at the beginning of every formula described.

\subsection{Hamilton's approach}

Given a general monic polynomial of degree six 
$$p(z) = z^6 + c_1 z^5 + c_2z^4 + c_3z^3 + c_4z^2+ c_5 z + c_6,$$
our goal is to determine a root of $p(z)$ in the simplest possible manner. Simplest here should be interpreted as meaning ``using the least number of independent variables", in the spirit of the resolvent problem of reducing to the minimal number of algebraic parameters needed. As such, we are going to solvably reduce this problem to solving for the roots of 
$$q(z) = z^6 + b_4z^2+ b_5 z + b_6,$$
where $b_4$, $b_5$, and $b_6$ have yet to be determined. Thus, if we adjoin to our list of operations the algebraic function that produces the roots of $q(z)$, i.e. determine a point on the Tschirnhaus complete intersection $\tau_{1,2,3}$, we can solve for the roots of all sextic polynomials $p(z)$. We follow the exposition of Green closely \cite[Section 2]{green1978analytic}.

Suppose that $z_1 , \dots , z_6$ are the roots of our original polynomial $p(z)$. To find the values of $z_1,\dots,z_6$, it suffices to solve for the roots of the polynomial
$$q(z) = \prod_{k=1}^6 (z - y_k)$$
where we define the roots $y_k$ of $q(z)$ by
$$y_k = \sum_{n=0}^4 a_n z_k^n.$$
Here we have yet to define the coefficients $a_n$. To see why this change of coordinates is useful, note that the $a_n$'s determine an invertible matrix that takes the $z_k$'s to the $y_k$'s. Thus the problem of finding the roots of $q(z)$ is solvably equivalent to finding the roots of $p(z)$.

We want to impose the following condition on the alternative polynomial $q(z)$: the coefficients on $z^5$, $z^4$, and $z^3$ must all vanish. This is equivalent to requiring that the $1$st, $2$nd, and $3$rd elementary symmetric polynomials in $y_k$ must vanish. Under a change of coordinates given by the Newton identities, we thus have that
\begin{align*}
    0 &= \sum_{k=1}^6 y_k= \sum_{k=1}^6 y_k^2 = \sum_{k=1}^6 y_k^3.
\end{align*}
Set $s_n = \sum z_k^n$. Substituting the definition $y_k = \sum a_n z_k^n$ into the above equations, we obtain three equations, called $H$ (hyperplane), $Q$ (quadric), and $C$ (cubic), which are all homogeneous in the quantities $a_n$, with coefficients given by the $s_n$. 

To form this reduction then is to find some point $[a_0: a_1 : \dots : a_4] \in \PP^4$ lying on the complete intersection $H \cap Q \cap C$. We pass to the hyperplane $H$ to instead work in $\PP^3$, i.e. we are finding a point on the intersection of $Q' = H\cap Q$ and $C' = H \cap C$. We describe the geometry of this process below, and note that this general method is described explicitly in \cite[Section 1]{wolfson2021tschirnhaus}. 

As mentioned before, the trivial solution $[1:0:0:0:0] \in\PP^4$ is not a valid point for our construction, as this corresponds to defining a value for $y_k$ independent of the original roots $z_k$. We wish to find a point $[a_0: \dots: a_4] \in \PP^4$ on the complete intersection not equal to the trivial solution. 

In particular, we want to solvably determine this point, which requires us using polynomials of degree no more than 4. This is a problem, since this projective variety is degree 6. Thus we will introduce an accessory quadratic irrationality, in the form of a tangent hyperplane $H'$.

We can find a hyperplane $H'$ not containing $[1:0:0:0:0] \in\PP^4$ that is tangent to $Q'$ by solving a quadratic equation. Since $H'$ is tangent to $Q'$, we have that $H' \cap Q'$ is a singular conic, and it comes equipped with one double point (this is by Bezout's theorem). Thus it is the union of two lines $L_1$ and $L_2$. To determine the difference between $L_1$ and $L_2$, we need to solve another quadratic equation.

It follows that $H' \cap Q' \cap C' = (L_1 \cap C) \cup (L_2 \cap C)$. To determine a point on $L_1 \cap C$, we need only solve a cubic equation. This introduces another square root and cube root to our adjoined irrationalities. This renders our desired point $[a_0:\dots: a_4] \in \PP^4$ which is not the trivial solution $[1: 0: 0:0:0]\in \PP^4$.

This procedure found us a degree $2^3 \cdot 3 = 24$ formula which reduces the problem of finding the roots of a generic sextic $p(z)$ to finding the roots of a reduced sextic $q(z)$. When we adjoin the algebraic function that solves a general $q(z)$, i.e. the function which finds a point on $\tau_{1,2,3}$, we have produced a solution to the generic sextic.

Before we write this solution as a tower, let us establish some notation. Along the maps $E_i \to E_{i-1}$, we will label the map by the corresponding deck group, or by the operation we adjoined (e.g. add in a square root $\sqrt{\Delta}$, solving a quadratic $\phi_2$, or more generally solving a degree $n$ polynomial $\phi_n$). The branched cover $\tilde{Z}_i\to Z_i$ we pull back along is implicit in the tower below (see Definition \ref{formulaDef}).

With this notation established, the tower of covers defining Hamilton's solution to the sextic can now be written down:
$$ E_5 \xrightarrow[]{A_6} E_4\xrightarrow[]{\phi_3}E_3\xrightarrow[]{\sqrt{-}} E_2 \xrightarrow[]{\sqrt{-}}E_1 \xrightarrow[]{\sqrt{\Delta}} \Spec(K_6).$$
\subsection{The modular approach}

This is almost identical to Hamilton's approach, and was first identified as an alternate solution to the general sextic in \cite[pp. 137-138]{farb2023modular}. Instead of setting $b_1 = b_2 = b_3 = 0$, we reduce generic sextics down to a form with $b_1 = b_2  = b_4 = 0$. In other words, our Tschirnhaus transformation into the $y_k$'s satisfies
\begin{align*}
    0 &= \sum_{k=1}^6 y_k = \sum_{k=1}^6 y_k^2 = \sum_{k=1}^6 y_k^4.
\end{align*}
The last step of Hamilton's solution,  solving a cubic, is then replaced with solving a quartic. To solve a degree 4 equation $\phi_4$, we need three square roots and a cube root (this can be seen by looking at the $S_4$-cover corresponding to Ferrari's formula). Adjoining the algebraic function that finds a point on the Tschirnhaus complete intersection $\tau_{1,2,4}$ completes the modular formula for the roots of the general sextic. As a tower, we have the formula 
$$ F_5 \xrightarrow[]{A_6} F_4\xrightarrow[]{\phi_4} F_3 \xrightarrow[]{\sqrt{-}}F_2 \xrightarrow[]{\sqrt{-}}F_1 \xrightarrow[]{\sqrt{\Delta}} \Spec(K_6).$$

\subsection{The quintic and the icosahedron} Before we describe Klein's approach to solving sextics, it will be helpful to first sketch his solution to quintics, which the icosahedron is at the center of. Adjoining the square root of the discriminant $\sqrt{\Delta}$ of a general quintic reduces the Galois group from the symmetric group $S_5$ to the alternating group $A_5$. The icosahedron, which has isometry group isomorphic to $A_5$, can be inscribed in the projective line $\PP^1$. This induces the icosahedral $A_5$-action on $\PP^1$, and defines a branched cover $\PP^1 \to \PP^1/A_5$. Klein calculated the ring of invariants of this $A_5$-action on $\PP^1$ to prove the isomorphism of varieties $\PP^1 / A_5 \cong \PP^1$; this quotient is given by a degree $60$ rational map  $\mathcal{I}([z_0:z_1])$ called the \emph{icosahedral function}. 

The key idea of Klein is that the general quintic can be reduced to a normal form for which inverting the icosahedral function $\mathcal{I}$ allows one to calculate all the roots of the quintic. To execute this proposed formula, one needs to extract an accessory square root, and to determine the correct invariants on $\PP^1$ associated to the quintic needed in order to solve for the roots of the quintic; the latter was accomplished by Klein when he calculated the ring of invariants. O. Nash gives a nice modern exposition of Klein's solution to the quintic, including how to produce the \emph{icosahedral invariant} $Z$ attached to a given quintic \cite{nash2014klein}. Thus to solve for the roots of a quintic with icosahedral invariant $Z$, one requires an accessory square root, the square root of the discriminant $\sqrt{\Delta}$, and the solution $z$ to the equation
$$\mathcal{I}(z) = Z.$$
In a tower, Klein's formula is given by
$$G_3 \xrightarrow[]{\mathcal{I}} G_2 \xrightarrow[]{\sqrt{-}} G_1 \xrightarrow[]{\sqrt{\Delta}} \Spec(K_5).$$

To invert the icosahedral function, an algebraic function, requires transcendental function theory (i.e. elliptic modular functions); this analytic work was already carried out by Hermite and other contemporary mathematicians like Brioschi and Kronecker. In fact, this classical work amounts to saying that the icosahedral cover is modular, in the sense that the cover $\PP^1\to \PP^1/A_5$ is $A_5$-equivariantly birational to the congruence cover $\HH^2/\SL_2(\ZZ)[5] \to \HH^2/\SL_2(\ZZ)$, where $\SL_2(\ZZ)[5]$ is the level $5$ congruence subgroup of the modular group.\footnote{Note we are using the exceptional isomorphism of groups $\PSL_2(\FF_5) \cong A_5$.} Klein viewed this as interesting secondary phenomena; to him, the algebraic part of solving the quintic was the core issue \cite{klein1905auflosung}. We will now describe the analogous algebraic part in Klein's solution to the general sextic polynomial.

\subsection{Klein's approach}\label{KleinsApproach}

Given the general sextic polynomial
$$z^6 + c_1 z^5 + c_2z^4 +c_3z^3 + c_4z^2 + c_5 z + c_6 = 0,$$
the first step of Klein's solution to the sextic, much like his formula for the roots of a quintic equation, is to reduce the sextic to a \emph{principal normal form}:
$$p(z) = z^6 +a_3z^3 + a_4z^2 + a_5 z + a_6 = 0.$$
This reduction is done via a degree $2$ Tschirnhaus transformation. We will also extract the square root of the discriminant $\sqrt{\Delta}$ to reduce the Galois group from $S_6$ to $A_6$. From this point onwards, there are no more similarities to the Tschirnhaus approach of root extraction --- Klein's approach to solve for the roots of a sextic is analogous to his solution for the roots of a quintic via the icosahedral function.

To solve sextics with the Kleinian approach, we need an $A_6$-invariant algebraic function analogous to the icosahedral function used to solve quintics. To do this, we replace the $A_5$-action on $\PP^1$ which appeared in the quintic case with an $A_6$-action on $\PP^2$ called the \emph{Valentiner action}. We will describe this $A_6$-action on $\PP^2$ now. 

Recall that the Schur multipliers of the alternating groups are
$$H^2(A_n;\CC^*) \cong \begin{cases}
    \ZZ/2\ZZ & \text{ if } n\geq 4 \text{ and } n\neq 6,7,\\
    \ZZ/6\ZZ & \text{ if } n=6,7.
\end{cases}$$
It is no accident that $A_6$ has an order $6$ Schur multiplier; there is a degree $3$ extension of $A_6$, denoted by $\mathcal{V} = 3.A_6$, which admits a faithful special linear action on $\CC^3$. This representation was first discovered by Valentiner \cite{valentiner1889endelige}, and soon after Wiman showed that this extension descends to a faithful representation $A_6 \to \PGL_3(\CC)$ with generating set representatives
$$Z = \begin{pmatrix*}
-1 & 0 & 0\\
0 & 1 & 0\\
0 & 0 & -1
\end{pmatrix*}, P = \frac{1}{2}\begin{pmatrix*}
1 & \tau^{-1} & -\tau\\
\tau^{-1} & \tau & 1\\
\tau & -1 & \tau^{-1}
\end{pmatrix*}, Q = \begin{pmatrix*}
1 & 0 & 0\\
0 & 0 & \rho^2\\
0 & -\rho & 0
\end{pmatrix*},$$
where $\tau = (1+\sqrt{5})/2$ and $\rho = e^{2\pi i /3}$ \cite{wiman1896ueber}. The relationship between these generators and various icosahedral and tetrahedral symmetries within the Valentiner group is carefully traced out in Crass--Doyle \cite{crass1997solving}.

It was already known to Wiman, and described by Klein, that to pass from the Valentiner action $\mathcal{V} = 3.A_6$ on $\CC^3$ to the alternating group action $A_6$ on $\PP^2$, one must adjoin an additional cube root --- the root we adjoin is used to find an inflection point on a canonical cubic curve $C \subset \PP^2$ associated to $p(z)$. The construction of such a curve is described in \cite[Step 8]{sutherland2019felix},\footnote{Note that Klein's outlined construction reorders when we adjoin the accessory square root which we used to put a general sextic in its principal form.} and the accessory cube root in \cite[Section 6]{crass1997solving}. 

Now that we have adjoined all natural and accessory irrationalities necessary, we can describe the $A_6$-invariant \emph{Valentiner function} $Y_\mathcal{V}$ on $\PP^2$ which defines the quotient map $\PP^2 \to \PP^2/A_6$. The inversion of $Y_\mathcal{V}$ will allow us to solve sextic polynomials, just as inversion of the icosahedral function $\mathcal{I}$ allowed us to solve quintic polynomials.

The ring of $A_6$-invariants on $\PP^2$ was worked out explicitly by Wiman and Klein, and is generated in degrees $6$, $12$, and $30$: the invariant forms in question are labeled $F$, $\Phi$, and $\Psi$, respectively, and are explicitly written out in coordinates on \cite[p. 221]{Crass1999}. By reindexing the grading, Klein showed that $\PP^2 /A_6 \cong \PP^2$. Klein and Fricke explicitly wrote the rational map $Y_\mathcal{V}$ in homogeneous coordinates (\cite[p. 19]{sutherland2019felix} and \cite[p. 302]{fricke1924lehrbuch}); it is expressed in terms of the invariant forms like so:
$$Y_\mathcal{V}(z) := [F(z)^3 \Phi(z) : \Psi(z) : F(z)^5].$$

The \emph{Valentiner invariant} $[v_p:w_p:1] \in \PP^2$ associated to the given sextic polynomial $p(z)$ can be calculated in terms of the coefficients of $p(z)$, the square root of the discriminant, and the accessory cube root; the extraction of this invariant is described in \cite{crass1997solving}, with more details found in \cite{fricke1924lehrbuch}. 

To conclude, Klein's formula for the roots of a sextic involves adjoining two square roots and a cube root, giving us a degree $2^2 \cdot 3 = 12$ formula before adjoining the Valentiner function. This gives us the following tower to solve for the roots of a general sextic:
$$H_4 \xrightarrow[]{A_6} H_3 \xrightarrow[]{\sqrt[3]{-}} H_2 \xrightarrow[]{\sqrt{-}} H_1 \xrightarrow[]{\sqrt{\Delta}} \Spec(K_6).$$
\section{The geometry of normal forms}\label{Section3}

Having described the three known solutions for the roots of a general sextic polynomial, we now go over the geometric structure of each algebraic surface underlying the normal forms of each solution. Previously, all normal forms considered were thought of as $K_6$-varieties --- we now change base and study the geometry of each surface \emph{as a $\CC$-variety}.

\begin{proposition}
Hamilton's normal form $V_{\Ham}$, given by the Tschirnhaus complete intersection $\tau_{1,2,3}$, is a nonsingular K3 surface.
\end{proposition}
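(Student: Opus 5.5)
The plan is to work with the model $V_{\Ham} = Z(\sigma_1,\sigma_2,\sigma_3)\subset \PP^5$ from the introduction, which is what the Tschirnhaus complete intersection $\tau_{1,2,3}$ becomes after base change to $\CC$: the coordinates are the roots $y_1,\dots,y_6$. Using the Newton identities as in the description of Hamilton's approach, we can replace $\sigma_1,\sigma_2,\sigma_3$ by the power sums $p_1,p_2,p_3$, where $p_k=\sum y_i^k$, since they generate the same ideal. Eliminating $y_6=-(y_1+\dots+y_5)$ presents $V_{\Ham}$ as the intersection of a quadric and a cubic in $\PP^4$. Smoothness is the substantive step; the K3 property then follows formally from adjunction and a Lefschetz-type vanishing.

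Step 1 is smoothness, which I expect to be the main obstacle and will handle with the Jacobian criterion. The gradients of $p_1,p_2,p_3$ at $y$ are, up to nonzero constants, the vectors $(1,\dots,1)$, $(y_1,\dots,y_6)$ and $(y_1^2,\dots,y_6^2)$. These are linearly dependent if and only if there is a nonzero polynomial $g(t)=a+bt+ct^2$ vanishing at every $y_i$. Such a $g$ has at most two roots, so then the coordinates $y_i$ take at most two distinct values. Suppose $y$ takes the value $\alpha$ with multiplicity $k$ and $\beta$ with multiplicity $6-k$. The equations become $k\alpha^j+(6-k)\beta^j=0$ for $j=1,2,3$. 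From $j=1$ we get $\beta=-k\alpha/(6-k)$; substituting into $j=2$ gives $k\alpha^2\bigl(1+k/(6-k)\bigr)=0$, which forces $\alpha=0$ and hence $\beta=0$. The case of a single common value $\alpha$ forces $6\alpha=0$. Either way $y=0$, which is not a point of projective space, so the three gradients are independent at every point of $V_{\Ham}$. Hence $V_{\Ham}$ is a smooth complete intersection of dimension $2$. The only real check here is this small algebra, and I would double-check the degenerate subcases (some $k=0$, or $\alpha=\beta$) carefully.

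Step 2 is the canonical bundle. For a smooth complete intersection of type $(1,2,3)$ in $\PP^5$, adjunction gives
$$K_{V}\cong \mathcal{O}_V(-6+1+2+3)=\mathcal{O}_V,$$
so the canonical bundle is trivial.

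Step 3 is the vanishing of $H^1(V,\mathcal{O}_V)$. I would obtain this from the Koszul resolution together with the vanishing of intermediate cohomology of line bundles on $\PP^5$; equivalently, the Lefschetz hyperplane theorem gives $\pi_1(V)=1$. Connectedness also follows, since a positive-dimensional complete intersection is connected. A smooth connected projective surface with $K_V\cong\mathcal{O}_V$ and $h^1(\mathcal{O}_V)=0$ is a K3 surface, which completes the proof. As a sanity check, $K_V^2=0$, and I expect $c_2=\deg V\cdot(\text{appropriate class})=24$ to come out of a Chern class computation, though that computation is not needed for the argument.
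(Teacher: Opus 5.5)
Your proposal is correct and follows the same route as the paper, which proves this by citing the Jacobian criterion for nonsingularity and adjunction for $K_V\cong\mathcal{O}_V$. Your power-sum computation fills in the Jacobian check explicitly. Your Koszul/Lefschetz argument for connectedness and $h^1(\mathcal{O}_V)=0$ supplies a step the paper leaves implicit; that step is needed to rule out, for example, an abelian surface.
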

\begin{proof}
    Nonsingularity follows from the Jacobi criterion. Triviality of the canonical bundle follows from a simple adjunction calculation.
\end{proof}

\begin{proposition}[{\cite[Chapter VIII, Theorem 2.6]{van1988hilbert}}]\label{vanderGeerBirational}
The modular normal form $V_{\Mod}$, given by the Tschirnhaus complete intersection $\tau_{1,2,4}$, is birationally a Hilbert modular surface.
\end{proposition}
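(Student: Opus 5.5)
We will prove that $V_{\Mod}=Z(\sigma_1,\sigma_2,\sigma_4)\subset\PP^5$ is birational to the Hilbert modular surface for $\QQ(\sqrt5)$ with a level structure. This is Hirzebruch's classical picture, and the plan follows van der Geer's treatment.

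First we rewrite the equations in power sums. By Newton's identities, on $\sigma_1=\sigma_2=0$ we have $s_1=s_2=0$ and $s_4=-4\sigma_4$. Hence $V_{\Mod}=Z(s_1,s_2,s_4)$. Restricting to the hyperplane $s_1=0$, this is the intersection of the quadric $s_2=0$ and the quartic $s_4=0$ in $\PP^4$. Both equations are $S_6$-invariant. We will match this model with Hirzebruch's model of the level-$2$ Hilbert modular surface $Y(2)$ for $\mathcal{O}=\ZZ[\tau]$, whose symmetry group contains $\SL_2(\mathcal{O}/2)\cong\SL_2(\FF_4)\cong A_5$. We work with the extension of this group by the Galois involution and the symmetric Hilbert modular group, and use $\PSL_2(\FF_9)\cong A_6$ to match the full $A_6$ (resp.\ $S_6$) action.

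The key input is Hirzebruch's theorem. The quotient of $\HH^2$ by the principal congruence subgroup $\Gamma(2)$, or by its appropriate extension, compactifies to the Clebsch diagonal cubic surface. This is the surface $\sum x_i=\sum x_i^3=0$ in $\PP^5$. The ring of symmetric Hilbert modular forms of level $2$ is generated by the Klein icosahedral invariants of weights $2,6,10$, together with one further form. From this we obtain an explicit model of a double cover of the Clebsch cubic, branched along a specific $S_6$-invariant curve.

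We will then relate $V_{\Mod}$ to this model. Coordinates $x_i$ on the hyperplane $s_1=0$ give a map $\pi:[x]\mapsto[x_i^2-\tfrac{1}{6}s_2]$. This sends $Z(s_1,s_2)$ into the hyperplane $\sum y_i=0$. Since $s_2=0$, the target is $[x_i^2]$, and the condition $s_4=0$ becomes $\sum y_i^2=0$. So $V_{\Mod}$ maps onto a quadric section, and the sign changes $x_i\mapsto\pm x_i$ act as a $(\ZZ/2)^5/\pm$ deck group. Rather than pushing this construction through, we may cite van der Geer, Chapter VIII, Theorem 2.6, and identify $V_{\Mod}$ with his model of the Hilbert modular surface for $\QQ(\sqrt5)$ with the $\Gamma$ corresponding to level $\sqrt5$ (or $2$) structure. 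To do so we compare his equations $\sigma_1=\sigma_2=\sigma_4=0$, which arise from the relation among the modular forms coming from the $\Gamma(\sqrt5)$ / icosahedral structure: the weight-$2$, $4$ invariants vanish identically on the six cusp-type coordinates. The plan is therefore to:
\begin{enumerate}
\item write down six modular forms $f_1,\dots,f_6$ of equal weight, permuted by $A_6$;
\item show that they satisfy $\sigma_1(f)=\sigma_2(f)=\sigma_4(f)=0$, using the dimension of the spaces of symmetric modular forms in weights $2$ and $4$;
\item show that the resulting map $Y\to V_{\Mod}$ is generically injective and dominant.
\end{enumerate}

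Step (3) is the main obstacle. Dominance follows from a dimension count, since the image is a surface inside the irreducible surface $V_{\Mod}$; we first check irreducibility via the Jacobian criterion away from finitely many nodes. For birationality we would compute the degree: compare $c_1^2$, or the volume of $Y$ via Hirzebruch's proportionality $\mathrm{vol}=2\zeta_K(-1)[\mathrm{PSL}_2(\mathcal{O}):\Gamma]$, with $\deg V_{\Mod}=8$ and its $K^2$ computed by adjunction ($K=\mathcal{O}(1)$, $K^2=8$). Equality of these invariants would force degree one. Alternatively, we show that the field generated by the ratios $f_i/f_j$ is the full field of $\Gamma$-modular functions.
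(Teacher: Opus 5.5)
Your identification of the Hilbert modular surface is wrong, and that identification is the crux of the statement. You propose to match $V_{\Mod}$ with the level-$2$ surface (via Hirzebruch's Clebsch cubic) or the level-$\sqrt5$ surface. Over the level-one surface, however, these covers are Galois with groups $\SL_2(\mathcal{O}/2)\cong\SL_2(\FF_4)\cong A_5$ and $\PSL_2(\mathcal{O}/\sqrt5)\cong\PSL_2(\FF_5)\cong A_5$. So they do not carry the $A_6$-symmetry your step (1) needs.

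The isomorphism $\PSL_2(\FF_9)\cong A_6$ that you invoke only enters at level $3$. Since $3$ is inert in $\QQ(\sqrt5)$, we have $\mathcal{O}/3\mathcal{O}\cong\FF_9$. The correct surface is $\Gamma_5[3]\backslash\HH^2\times\HH^2$ with $\Gamma_5[3]=\ker(\SL_2(\mathcal{O})\to\PSL_2(\FF_9))$, which is exactly the surface the paper's proof names.

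Your own step (3) would have exposed the mismatch. With $\zeta_K(-1)=1/30$, the volume $2\zeta_K(-1)[\PSL_2(\mathcal{O}):\overline{\Gamma}]$ is $4$ at levels $2$ and $\sqrt5$ (index $60$) but $24$ at level $3$ (index $360$). Only the latter is consistent with $\chi(\mathcal{O}_{V_{\Mod}})=1+p_g=6$.

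Once the level is corrected, your plan is the paper's. The six forms are weight-$2$ cusp forms on $\Gamma_5[3]$. Under $f\mapsto f\,dz_1\wedge dz_2$ they give the $5$-dimensional space $H^0(V_{\Mod},\Omega^2)$, written as six forms summing to zero. The map in step (3) is therefore the canonical map, and by van der Geer it is birational onto $\tau_{1,2,4}$.

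Two other steps would fail as written. First, in step (2), take the $f_i$ to have weight $2$. Then $\sigma_2(f)$ is a level-one cusp form of weight $4$, and it vanishes because there are none. But $\sigma_4(f)$ has weight $8$, and level-one cusp forms of weight $8$ do exist (the weight-$2$ Eisenstein series times the weight-$6$ cusp form). So $\sigma_4(f)=0$ does not follow from a dimension count in weights $2$ and $4$; it needs a separate argument, which is part of what the cited theorem provides.

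Second, the squaring-map digression does not work. The sign changes $x_i\mapsto\pm x_i$ do not preserve $\sigma_1=0$, so they are not deck transformations of $V_{\Mod}$. Also, the Clebsch cubic is $\sum x_i=\sum x_i^3=0$ in $\PP^4$, with $S_5$-symmetry; in $\PP^5$ these two equations cut out a threefold.
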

\begin{proof}
The birational equivalence is induced by the cusp forms, and will be discussed in \Cref{Section5}. The Hilbert modular surface in question is given by $\Gamma_5[3]\backslash \HH^2\times \HH^2$, where
$$\Gamma_{5}[3] = \ker \left(\SL_{2}\left(\ZZ\left[\frac{1+\sqrt{5}}{2}\right]\right)\rightarrow \PSL_2(\FF_9)\right).$$
Note that the choice of a level $3$ structure is no accident, as Fricke showed that $\PSL_2(\FF_9) \cong A_6$ \cite{fricke1924lehrbuch}.
\end{proof}

\begin{proposition}\label{HodgeDiamond}
The Tschirnhaus complete intersection $\tau_{1,2,4}$ has $30$ simple $A_1$-singularities and Hodge number $h^{2,0} = \dim(H^0(\tau_{1,2,4},\Omega^2)) = 5$, and degree $K_{\tau_{1,2,4}}^2 = 8$.
\end{proposition}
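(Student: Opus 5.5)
The plan is to rewrite $\tau_{1,2,4}$ as a complete intersection of type $(2,4)$ in $\PP^4$, read off $K$, $K^2$ and $h^{2,0}$ by adjunction, and then locate the singularities with the Jacobian criterion.

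\textbf{Setup.} By Newton's identities, on the locus $\sigma_1=\sigma_2=0$ we have $\sigma_4 = -\tfrac14 s_4$, where $s_k=\sum x_i^k$. The correction terms in Newton's identity for $\sigma_4$ all involve $s_1$ or $s_2$, hence vanish there. Therefore
$$\tau_{1,2,4}=Z(s_1,s_2,s_4)\subset \PP^5 .$$
Using $s_1=0$ to cut down to the hyperplane $\PP^4$, this is the intersection of a quadric and a quartic in $\PP^4$. Once we know the singularities are isolated rational double points, the surface is a normal Gorenstein complete intersection, so adjunction gives
$$\omega_{\tau}=\mathcal{O}(2+4-5)=\mathcal{O}(1).$$
Hence $K^2=\deg\tau_{1,2,4}=2\cdot 4=8$. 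The Koszul resolution gives $H^0(\tau,\mathcal{O}(1))=H^0(\PP^4,\mathcal{O}(1))$, since the ideal has no linear part, so $p_g=5$. Rational double points do not change $p_g$ or $K^2$ on the minimal resolution, so these numbers are intrinsic to the surface.

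\textbf{Locating the singular points.} At a point $x$ the gradients of $s_1,s_2,s_4$ are proportional to the vectors $(1)_i$, $(x_i)_i$ and $(x_i^3)_i$. The point is singular exactly when these are linearly dependent. That happens precisely when there exist $(a,b,c)\neq 0$ with $a+bx_i+cx_i^3=0$ for every $i$. Consequently all coordinates of $x$ take values among the roots of a cubic with no $x^2$ term: either at most three values $\alpha,\beta,\gamma$ with $\alpha+\beta+\gamma=0$, or, if $c=0$, at most two values.

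I then run through the possible multiplicity patterns of the values, imposing $s_1=s_2=s_4=0$ in each case.
\begin{itemize}
\item Patterns such as $(3,3)$, $(4,2)$, $(5,1)$, $(4,1,1)$ and $(3,2,1)$ should be killed by these equations. For example, in the $(3,3)$ case we get $b=-a$ and then $s_2=6a^2\neq 0$.
\item The pattern $(2,2,2)$ survives. There $s_1=0$ is automatic, and $s_2=0$ forces $\sigma_2(\alpha,\beta,\gamma)=0$. So $\alpha,\beta,\gamma$ are $1,\rho,\rho^2$ up to scale, and then $s_4=2\sum\alpha^4=2\sum\alpha=0$ holds automatically.
\end{itemize}
The singular points are therefore the ordered splittings of the six coordinates into three pairs labelled $1,\rho,\rho^2$. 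There are $6!/2^3=90$ such splittings, and we identify them modulo the scaling by $\rho$, which cyclically permutes the labels. This gives $90/3=30$ points, forming a single $S_6$-orbit.

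\textbf{Type of the singularities.} Because the 30 points form one orbit, it suffices to check a single one, say $x=[1:1:\rho:\rho:\rho^2:\rho^2]$. There I take local coordinates on $\PP^4$ and use two of the three equations, chosen among those whose gradients are independent at $x$, to solve for two coordinates as functions of the remaining three. The last equation then restricts to a function of three variables with vanishing linear term. I verify that its Hessian is nondegenerate, so the singularity is $A_1$. Nondegeneracy could also be organised by the stabiliser of $x$ in $S_6$, which acts on the $3$-dimensional tangent space, together with Schur's lemma.

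The main obstacles are this Hessian computation and making sure the case analysis on multiplicity patterns is exhaustive, including the degenerate case $c=0$ and possible coordinates equal to zero. That is where I expect the most care to be needed.
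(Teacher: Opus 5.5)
Your proposal is correct, and it takes a genuinely different route from the paper. The paper's own proof is pure citation: Hirzebruch--van der Geer for the $30$ nodes and $K^2=8$, and Deligne's Hodge-number formula for \emph{smooth} complete intersections (SGA~7) plus birational invariance for $h^{2,0}=5$. For a nodal surface, that last step tacitly needs exactly the rational-double-point argument you make explicit. You instead rewrite $\tau_{1,2,4}=Z(s_1,s_2,s_4)$ as a $(2,4)$ complete intersection in $\{s_1=0\}\cong\PP^4$, which gives $\omega\cong\mathcal{O}(1)$, $K^2=8$ and $p_g=5$ at once, and then locate the singular points by hand.

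Your route buys information the paper later uses without proof. The nodes are precisely the $S_6$-orbit of $[1:1:\rho:\rho:\rho^2:\rho^2]$. So $[1:\rho:\rho^2:0:0:0]$, whose four distinct coordinate values cannot all be roots of $a+bt+ct^3$, is visibly a smooth point, as needed in \Cref{cor:AmTrivial}. Likewise, $\omega\cong\mathcal{O}(1)$ shows directly that the canonical map is the embedding into $\PP^4$, which the paper uses when identifying $H^0(\tau_{1,2,4},\Omega^2)$ as a $5$-dimensional $A_6$-representation. The citation, in exchange, is shorter and ties the surface to the Hilbert modular picture.

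A few small repairs:
\begin{itemize}
\item For $c=0$, all coordinates coincide, giving one value rather than two, and $s_1$ kills it.
\item The finiteness of the rank-deficient locus is what makes $s_1,s_2,s_4$ a regular sequence, so it logically precedes the Koszul and adjunction step.
\item The local computation should be done in a chart of $\PP^5$ using $s_1,s_2$ (or in $\PP^4$ using $s_2$ alone), since $\nabla s_4=4\nabla s_1$ at $x$.
\end{itemize}
The Hessian you defer is indeed nondegenerate. Set $v_1=e_1-e_2$, $v_2=e_3-e_4$, $v_3=e_5-e_6$. On $\{\sum e_i=\sum x_ie_i=0\}$ modulo the Euler direction, the quadratic part of $s_4$ is $3(v_1^2+\rho^2v_2^2+\rho v_3^2)$, so the nodes are $A_1$.
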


\begin{proof}
The singularity type and degree is worked out in detail by Hirzebruch and van der Geer \cite[5.5, Example II]{hirzebruch1981lectures}. The Hodge number computation follows from \cite[\'{E}xpos\'{e} XI, Theorem 2.3.]{deligne2006groupes} and birational invariance of $h^{2,0}$. 
\end{proof}

Although Klein's solution is certainly the most interesting of the three solutions, it does not have a particularly exotic normal form; we include the proposition below for completeness:

\begin{proposition}
    Klein's normal form $V_{\Kle}$ is the projective plane $\PP^2$. 
\end{proposition}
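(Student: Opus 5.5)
This statement is essentially a matter of unwinding the definitions, so the plan is to trace Klein's formula through Definition \ref{NormalFormDefinition} rather than to prove any geometric fact. In the tower
$$H_4 \xrightarrow[]{A_6} H_3 \xrightarrow[]{\sqrt[3]{-}} H_2 \xrightarrow[]{\sqrt{-}} H_1 \xrightarrow[]{\sqrt{\Delta}} \Spec(K_6),$$
the top step $H_4 \to H_3$ is the one obtained by adjoining the Valentiner function. By Definition \ref{formulaDef}, this step is realized, over a Zariski open $U_3 \subseteq H_3$, as a pullback $H_4|_{U_3} \cong U_3 \times_{Z_3} \tilde{Z}_3$. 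Here the classifying morphism $H_3 \to Z_3$ sends a sextic (together with the adjoined roots) to its Valentiner invariant $[v_p:w_p:1]$. The branched cover being pulled back is the quotient $\tilde{Z}_3 = \PP^2 \to \PP^2/A_6 = Z_3$ by the Valentiner action.

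The key steps, in order, are as follows.

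\begin{enumerate}
    \item Identify $\tilde{Z}_{k-1}$ with $k=4$ as the source of the Valentiner quotient map $Y_{\mathcal{V}} = [F^3\Phi : \Psi : F^5]$.
    \item Check that $Y_{\mathcal{V}}$ is a branched cover in the sense of Definition \ref{branchedCoverDefinition}. It is dominant and generically finite because it is the quotient by a finite group acting faithfully on $\PP^2$ via Wiman's generators $Z, P, Q$, so it has degree $|A_6| = 360$ onto its image.
    \item Check that the target $Z_3$ is $2$-dimensional, using Klein's computation that the invariant ring is generated in degrees $6, 12, 30$. This gives $\PP^2/A_6 \cong \PP^2$ after regrading the weighted projective plane $\PP(6,12,30)$ (equivalently $\PP(1,2,5)$, which is isomorphic to the weighted plane with the gcd removed and is rational). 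So the formula has at most $2$ variables, consistent with the other two.
\end{enumerate}

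The only point needing care, and the one I expect to be the mild obstacle, is that the $A_6$-action on $\PP^2$ is a genuine projective action. One must adjoin the accessory cube root in $H_3 \to H_2$ so that the $3.A_6$-representation on $\CC^3$ descends to $A_6 \subset \PGL_3(\CC)$ and the pullback square exists over $H_3$. This is exactly the content of the Wiman--Klein discussion in \Cref{KleinsApproach}, so I would cite it rather than reprove it. With that in place, the normal form is $\tilde{Z}_3 = \PP^2$, which proves the proposition.
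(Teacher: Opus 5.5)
Your proposal is correct and is essentially the paper's own reasoning. The paper gives no separate proof and treats the statement as immediate from \Cref{KleinsApproach}: the top step $H_4 \to H_3$ of Klein's tower is pulled back from the Valentiner quotient $\PP^2 \to \PP^2/A_6$, so the normal form $\tilde{Z}_3$ of \Cref{NormalFormDefinition} is $\PP^2$.

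Two small cautions. First, your step 3 is not needed to identify $\tilde{Z}_3$, and it holds only birationally: the quotient is $\PP(6,12,30) \cong \PP(1,2,5)$, which is rational but singular. Second, the accessory cube root is what makes the pullback square over $H_3$ exist; it is not what makes the action descend to $\PGL_3(\CC)$, and that descent is automatic because the center of $3.A_6$ acts by scalars.
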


\section{Obstructing maps of formulas via geometry}\label{Section4}

We now come to the heart of this work, showing that the three simplest known formulas for the roots of sextics are fundamentally incompatible: concretely, we will prove Theorem \ref{MainTheorem}, showing that there are no equivariant branched covers between any of their normal forms. Before obstructing the existence of maps that relate the distinct solutions of the sextics, we begin with some generalities that will apply to our proofs. In what follows, we will often trade between the notations $V_{\Ham} = \tau_{1,2,3}$, $V_{\Mod} = \tau_{1,2,4}$, and $V_{\Kle} = \PP^2$, depending on whether we want to emphasize the surface's role as a normal form or its construction as a complete intersection in the appropriate projective space.

\begin{lemma}\label{FromKnToC}
    If there were an $A_6$-equivariant dominant rational map between any of the normal forms $V_{\Ham}$, $V_{\Mod}$, or $V_{\Kle}$ as $K_n$-varieties, there would be an induced $A_6$-equivariant map of $\CC$-varieties.
\end{lemma}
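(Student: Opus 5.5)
The plan is a spreading-out and specialization argument: such a map is defined by finitely many polynomials with coefficients in $K_n = \CC(c_1,\dots,c_n)$, so it extends over a dense open subset $U$ of the parameter space $\A^n_{\CC}$, and we specialize it at a general closed point of $U$.

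First, set up the families. Each normal form is defined over $\CC$ before base change, or at least over a finitely generated $\CC$-algebra. The surface $\PP^2$ with the Valentiner action is visibly defined over $\CC$. The complete intersections $\tau_{1,2,3}$ and $\tau_{1,2,4}$ are cut out by polynomials in the power sums $s_n$, hence have coefficients in $K_n$. After the substitution $y_k = \sum a_n z_k^n$, the fiber of $\tau_{1,2,i}$ over a point $c\in\A^n(\CC)$ with distinct roots is isomorphic to the $\CC$-surface $Z(\sigma_1,\sigma_2,\sigma_i)\subset \PP^5$ in the $y$-coordinates. This identification is $A_6$-equivariant, since $A_6$ permutes the roots, equivalently the $y_k$. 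So over the open set where the discriminant is nonzero (and after the solvable extensions in the towers), each normal form $V_{K_n}$ is the generic fiber of a family $\mathcal{V}\to U$ whose closed fibers are all $A_6$-equivariantly isomorphic to the $\CC$-surface $V$ from the introduction. In fact $\mathcal{V}\cong V\times U$ after the finite étale base change that adjoins the roots.

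Second, spread out the map. Suppose $f\colon V_{K_n}\dashrightarrow W_{K_n}$ is $A_6$-equivariant and dominant. It is given by finitely many homogeneous forms with coefficients in $K_n$. Shrinking $U$ to avoid the denominators, and also the loci where the forms vanish identically or where dominance fails (the Jacobian rank is generically maximal and this is an open condition), we get a rational map $\mathcal{F}\colon \mathcal{V}_U\dashrightarrow \mathcal{W}_U$ over $U$. Equivariance is a finite set of polynomial identities $f\circ g = g\circ f$ for $g$ in a generating set of $A_6$. These hold over $K_n$, so they hold on every fiber where the map is defined.

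Third, specialize. Choose a closed point $u\in U(\CC)$ outside the countably many bad loci. The fiber $\mathcal{F}_u$ is then an $A_6$-equivariant rational map $V\dashrightarrow W$ of $\CC$-varieties. It is dominant because the rank of the Jacobian is maximal at $u$, and it is generically finite if $f$ was, because fiber dimension is upper semicontinuous.

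The main obstacle is the bookkeeping in the second step. One must check that restricting to a fiber does not land inside the indeterminacy locus or the degeneracy locus of the map; this is handled by the choice of $u$ in the generic open set. One must also check that the identification of the fibers with the fixed $\CC$-surfaces is $A_6$-equivariant, and this is where the precise form of the Tschirnhaus substitution matters.
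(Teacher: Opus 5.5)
Your argument is correct, but it takes a different route from the paper. The paper does no spreading out. It base changes the $K_n$-map along $K_n\subseteq\overline{K_n}$ (universal property of the fiber product). It then invokes Steinitz: $\overline{K_n}$ is algebraically closed of characteristic $0$ and cardinality $2^{\aleph_0}$, hence abstractly isomorphic to $\CC$.

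\textbf{What the paper's route buys and costs.} It is shorter, because base change along a field extension automatically preserves dominance, generic finiteness and equivariance. Passing to $\overline{K_n}$ also splits the sextic, so the Tschirnhaus models become the standard surfaces $Z(\sigma_1,\sigma_2,\sigma_i)$ with no further bookkeeping. The price is that the isomorphism $\overline{K_n}\cong\CC$ is non-canonical and need not fix $\CC\subset K_n$. To be sure you recover the original surfaces and actions, you should choose it to fix $\overline{\QQ}$, which the relative Steinitz theorem allows. Everything involved is defined over $\overline{\QQ}$, including the Valentiner matrices with entries in $\QQ(\sqrt5,\rho)$.

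\textbf{What your route buys and costs.} Specialization at a $\CC$-point is $\CC$-linear. So you land on the literal surfaces with their given actions, with no appeal to choice or field automorphisms, and you exhibit the complex map as a member of an algebraic family. The cost is exactly the checks you list: avoiding indeterminacy and Jacobian degeneracy, and having the equivariance identities hold on fibers. These are finitely many conditions, so a single dense Zariski open in the base handles all of them; you do not need ``countably many bad loci.''

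\textbf{A bookkeeping correction.} Over $K_n$ the Tschirnhaus model carries no $A_6$-action. In the $a$-coordinates, permuting the $y_k$ becomes $a\mapsto M^{-1}P_gMa$, with $M$ the Vandermonde matrix in the roots, and these matrices are only defined over the splitting field. Likewise, the fiberwise identification with $Z(\sigma_1,\sigma_2,\sigma_i)$ depends on an ordering of the roots. So your Step 2 should be carried out over the finite \'etale cover $\tilde U$ of ordered roots, where $\mathcal V\cong V\times\tilde U$ equivariantly, rather than over $U$.
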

\begin{proof}
    This is a simple consequence of base change: given any field extension $K \subseteq L$ and a $K$-scheme $X$, let $X_L :=
X_K \times_{\Spec(K)}\Spec(L)$. Any morphism $\varphi_K : X_K \to Y_K$ of schemes over $K$ induces a unique morphism $f_L : X_L \to Y_L$ of schemes over $L$ by the universal property of fiber products. Given such a morphism of normal forms over $K_n$, it would induce a unique morphism over the algebraic closure $\overline{K_n} \cong \CC$. To briefly explain the last (non-canonical) isomorphism, recall that for a fixed characteristic, there is exactly one model up to isomorphism for an algebraically closed field of a given uncountable cardinality \cite{steinitz1910algebraische}. Thus any characteristic $0$ algebraically closed field of cardinality $2^{\aleph_0}$ is isomorphic to the complex numbers.
\end{proof}

To obstruct any connections between the three algebraic formulas for roots of sextics, we must prove that no $A_6$-equivariant maps between their normal forms exist. By Lemma \ref{FromKnToC}, it suffices to obstruct $A_6$-equivariant branched covers between these varieties over the complex numbers.

\subsection{Obstructing maps from the modular normal form}

Recall that our modular normal form is defined by the Tschirnhaus complete intersection $V_{\Mod} = \tau_{1,2,4} \subset \PP^5$. We would like to obstruct $A_6$-equivariant maps to Klein's and Hamilton's normal forms.\footnote{As mentioned in the introduction, maps in the other direction are not considered, as it would also be impossible for such varieties to map dominantly onto a surface of general type.} Our first obstruction will arise by viewing these spaces' associated cohomology groups as $A_6$-representations.

\begin{proposition}
The space of global sections of holomorphic 2-forms $H^0(\tau_{1,2,4},\Omega^2)$ is isomorphic to an irreducible 5-dimensional representation of $A_6$.
\end{proposition}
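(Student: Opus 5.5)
The plan is to identify $H^0(\tau_{1,2,4},\Omega^2)$ with the linear forms on $\PP^5$ modulo $\sigma_1$, keeping track of the permutation action, and then to check that this is the standard $5$-dimensional representation of $A_6$.

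First, reduce to the dualizing sheaf. By \Cref{HodgeDiamond}, $\tau_{1,2,4}$ has only $A_1$-singularities, which are rational double points. So for a minimal resolution $\widetilde{X}\to\tau_{1,2,4}$ we have
$$H^0(\widetilde{X},\Omega^2_{\widetilde X}) \cong H^0(\tau_{1,2,4},\omega_{\tau_{1,2,4}}).$$
The resolution can be taken $S_6$-equivariant, since the singular set is $S_6$-stable and we may blow up canonically. Hence this identification is equivariant, and it is compatible with the paper's convention of reading $H^0(\tau_{1,2,4},\Omega^2)$ as the birational invariant. Adjunction for a complete intersection of degrees $1,2,4$ in $\PP^5$ gives
$$\omega_{\tau_{1,2,4}} \cong \mathcal{O}(1+2+4-6) = \mathcal{O}(1).$$

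Second, compute the sections. From the Koszul resolution of the complete intersection, the restriction $H^0(\PP^5,\mathcal{O}(1))\to H^0(\tau_{1,2,4},\mathcal{O}(1))$ is surjective. The needed vanishings are $H^1$ and $H^2$ of negative twists of $\mathcal{O}_{\PP^5}$. Its kernel is spanned by the single linear equation $\sigma_1$. Therefore $H^0(\omega)\cong \langle z_1,\dots,z_6\rangle/\langle\sigma_1\rangle$, which is $5$-dimensional, in agreement with $h^{2,0}=5$ from \Cref{HodgeDiamond}.

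Third, determine the action explicitly via Poincaré residues. A section of $\omega$ has the form
$$\ell \cdot \operatorname{Res}\frac{\Omega_{\PP^5}}{\sigma_1\sigma_2\sigma_4},$$
where $\ell$ is a linear form and $\Omega_{\PP^5}=\sum_i (-1)^i z_i\, dz_1\wedge\cdots\widehat{dz_i}\cdots\wedge dz_6$. A permutation $g\in S_6$ fixes each $\sigma_k$ and multiplies $\Omega_{\PP^5}$ by $\operatorname{sgn}(g)$. As an $S_6$-representation we therefore get
$$H^0(\Omega^2)\cong (\CC^6/\CC\cdot(1,\dots,1))^{*}\otimes\operatorname{sgn},$$
the standard representation twisted by the sign character. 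Restricted to $A_6$, the sign is trivial, so this is the standard $5$-dimensional representation of $A_6$.

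Finally, irreducibility is the classical fact that the standard representation of $A_n$ is irreducible for $n\ge4$. It follows because $A_n$ acts $2$-transitively on $\{1,\dots,n\}$, so the permutation character $\pi$ satisfies $\langle\pi,\pi\rangle=2$.

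The main obstacle is the second step. One must make sure that the identification of $H^0(\Omega^2)$ of the resolution with $H^0(\mathcal{O}(1))$ is genuinely $A_6$-equivariant, including the sign twist from the residue. One must also make sure that no extra sections appear: this uses that the $30$ nodes are rational singularities, so the resolution does not change $p_g$. I would check the sign twist carefully on a transposition, even though it disappears on $A_6$.
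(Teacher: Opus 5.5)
Your proof is correct, but it takes a different route from the paper's.

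\textbf{The paper's route.} It imports $h^{2,0}=5$ from the complete-intersection Hodge number computation (SGA 7 plus birational invariance). It then observes that the canonical map of $\tau_{1,2,4}$ is its embedding in the hyperplane $\{\sigma_1=0\}\cong\PP^4$, on which $A_6$ acts nontrivially by permuting coordinates. Irreducibility then comes from the character table of $A_6$. Its irreducible representations have dimensions $1,5,5,8,8,9,10$, and $A_6$ is simple, so a $5$-dimensional representation is either five copies of the trivial one or irreducible. Nontriviality therefore suffices, and which of the two $5$-dimensional irreducibles occurs is settled only ``by inspection''.

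\textbf{Your route.} You compute the representation outright. Rational double points give $H^0(\widetilde X,\Omega^2)=H^0(\omega_X)$, and adjunction gives $\omega_X=\mathcal{O}_X(1)$. The Koszul complex identifies the sections with linear forms modulo $\sigma_1$, and the Poincar\'e residue tracks the $S_6$-action up to the sign character. Irreducibility then follows from $2$-transitivity of $A_6$ rather than from the classification of its irreducibles.

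\textbf{What each buys.} Your version is self-contained: it recovers $h^{2,0}=5$ instead of importing it. It also proves the identification of the canonical system with hyperplane sections, which the paper's nontriviality step uses without justification. Finally, it pins down exactly which $5$-dimensional irreducible appears: the restriction of the standard representation, rather than its image under an exceptional outer automorphism of $A_6$. It also records the finer $S_6$-structure, namely the standard representation twisted by sign. The paper's version is shorter because it exploits the numerical accident that $A_6$ has no irreducible representations of dimension $2$, $3$, or $4$, so it never needs to track the action precisely.

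One small bookkeeping point. Chasing the Koszul complex twisted by $\mathcal{O}(1)$ also needs $H^1(\mathcal{O}_{\PP^5})=0$ and $H^3(\mathcal{O}_{\PP^5}(-6))=0$, not only $H^1$ and $H^2$ of negative twists. All of these vanish, so nothing changes.
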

\begin{proof} 
\Cref{HodgeDiamond} tells us that $H^0(\tau_{1,2,4},\Omega^2)$ is $5$-dimensional. Moreover, it defines the canonical embedding $\tau_{1,2,4} \to \PP H^0(\tau_{1,2,4},\Omega^2) \cong \PP^4$, upon which $A_6$ acts nontrivially, as it comes from the permutation representation on $\PP^5$, which passes through taking the hyperplane section into $\PP^4$. As the $A_6$-action is nontrivial, $H^0(\tau_{1,2,4}, \Omega^2)$ is isomorphic to an irreducible $5$-dimensional representation of $A_6$, as claimed. By inspection, it is the one which comes from the permutation representation; for a classification of irreducible $A_6$-representations, see \cite[Part I]{serre1977linear}.
\end{proof}

We now argue using Schur's lemma that there exists no equivariant branched cover $V_{\Mod} \to V_{\Ham}$:

\begin{theorem}\label{NoModularHamilton}
There exists no $A_6$-equivariant generically finite dominant rational map $V_{\Mod}\to V_{\Ham}$.
\end{theorem}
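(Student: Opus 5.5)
The plan is to pull back holomorphic $2$-forms along a hypothetical map and compare the resulting $A_6$-representations via Schur's lemma. Suppose $f\colon V_{\Mod} \dashrightarrow V_{\Ham}$ is an $A_6$-equivariant generically finite dominant rational map. Replace $V_{\Mod}$ by an $A_6$-equivariant resolution of singularities $\widetilde V$, which is possible in characteristic zero and does not change $H^0(\Omega^2)$ by birational invariance. Then resolve the indeterminacy of $f$ equivariantly, so that $f$ becomes an equivariant morphism $\widetilde V' \to V_{\Ham}$, again without changing $H^0(\Omega^2)$. Pullback of forms gives a linear map
$$f^*\colon H^0(V_{\Ham},\Omega^2) \to H^0(\widetilde V',\Omega^2) \cong H^0(\tau_{1,2,4},\Omega^2),$$
and equivariance of $f$ makes $f^*$ a homomorphism of $A_6$-representations.

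Next I would check that $f^*$ is injective. Since $f$ is generically finite and dominant, it is étale on a dense open set. A nonzero $2$-form pulls back to a form that is nonzero there, because locally $f^*\omega = (\det Jf)\cdot \omega\circ f$ and $\det Jf \neq 0$ generically. The K3 surface $V_{\Ham}$ has $H^0(\Omega^2)\cong\CC$, spanned by a nowhere-vanishing form $\omega$. So $f^*$ embeds the one-dimensional representation $\CC\omega$ of $A_6$ into $H^0(\tau_{1,2,4},\Omega^2)$.

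Finally, $A_6$ is simple and nonabelian, so it has no nontrivial characters, and $\CC\omega$ must be the trivial representation. By the preceding proposition, $H^0(\tau_{1,2,4},\Omega^2)$ is an irreducible $5$-dimensional representation. Schur's lemma then says there is no nonzero equivariant map from the trivial representation into it, since the image would be a nonzero invariant line and hence a proper subrepresentation. This contradicts the injectivity of $f^*$, so no such $f$ exists.

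The main obstacle is making sure the $A_6$-action on $H^0(\tau_{1,2,4},\Omega^2)$ used in this comparison really is the irreducible one. The relevant action on forms is the pullback action, which may differ from the action on the canonical embedding by a twist (a dual, or a sign character coming from the $S_6$ structure). A twist by a character is harmless here, since $A_6$ has no nontrivial characters, and the dual of an irreducible representation is still irreducible. So the preceding proposition is enough, with at most a remark about the identification of the action. I should also confirm that the equivariant resolution of indeterminacy preserves the identification of representations on $H^0(\Omega^2)$; this holds because the birational isomorphisms involved are themselves equivariant.
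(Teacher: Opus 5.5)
Your proposal is correct and is essentially the paper's proof: you pull back the holomorphic $2$-form of the K3 surface, show the pullback is injective (you via the generically nonvanishing Jacobian determinant, the paper via the volume form $\omega\wedge\bar{\omega}$), and conclude by Schur's lemma because $H^0(\tau_{1,2,4},\Omega^2)$ is an irreducible $5$-dimensional $A_6$-representation. Your added remarks on equivariant resolution and on a possible character twist of the action on $2$-forms are harmless refinements that the paper leaves implicit.
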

\begin{proof}
Assume for the sake of contradiction there exists an $A_6$-equivariant map $h : \tau_{1,2,4} \to \tau_{1,2,3}$. The induced map on cohomology
$$h^* : H^0(\tau_{1,2,3},\Omega^2) \to H^0(\tau_{1,2,4},\Omega^2)$$
must also be $A_6$-equivariant. Moreover, we claim that $h^*$ is injective. Suppose for the sake of contradiction that $h^*$ was not injective, and let $\omega$ be a representative generator of $H^0(\tau_{1,2,3},\Omega^2)$. Taking conjugates and cup product yields a volume form $\omega \wedge \Bar{\omega}$ on the K3 surface $\tau_{1,2,3}$. By naturality of cup products, $h^*(\omega \wedge \Bar{\omega}) = h^*\omega \wedge h^* \Bar{\omega} = 0$, i.e. the map between volume forms is the zero map. This would be a contradiction, as $h$ is dominant, so $h^*$ is indeed injective.

Given two irreducible representations $V$ and $W$, Schur's lemma tells us that $\Hom_{A_6}(V,W)$ is $1$-dimensional when $V \cong W$ and, $0$-dimensional otherwise. Since these $A_6$-representations aren't isomorphic (because $\dim(H^0(\tau_{1,2,3},\Omega^2)) = 1 \neq 5 = H^0(\tau_{1,2,4},\Omega^2)$), the space of homomorphisms between them is $0$-dimensional. Thus the induced map on $H^0(\Omega^2)$ had to be the zero map, so no dominant rational map between the normal forms could have existed.
\end{proof}

To obstruct an equivariant branched cover $V_{\Mod}\to V_{\Kle}$, we will use a more sophisticated tool coming from equivariant birational geometry, namely an invariant called the \emph{Amitsur group}. Amitsur groups arise when studying linearizations of line bundles and are a useful equivariant
birational invariant \cite{blanc2023finite}. As far as we are aware, our use of the Amitsur group to obstruct equivariant dominant rational maps outside of the realm of equivariant unirationality is novel. For further background on invariants in equivariant birational geometry and their applications, see \cite{kresch2025unramified} and \cite{kresch2026invariants} and citations therein.

Given a smooth projective variety $X$ with a finite $G$-action, a line bundle $\mathcal{L}$ on $X$ is \emph{linearizable} if there exists a linear action of
$G$ on the total space of $\mathcal{L}$ that is compatible with the $G$-action on $X$.
A choice of such action is a \emph{linearization} of $\mathcal{L}$ and the set of isomorphism
classes of line bundles together with a choice of linearization forms a group $\Pic(X, G)$.

The Leray spectral sequence for Deligne--Mumford stacks yields an exact sequence
\begin{equation}\label{eq:LerayExactSequence}
    0 \to \Hom(G,\CC^*) \to \Pic(X,G) \to \Pic(X)^G \xrightarrow[]{\delta_2} H^2(G,\CC^*) \to \mathrm{Br}([X/G]),
\end{equation}
where $\Pic(X,G) = \Pic([X/G])$ denotes the subgroup of $G$-linearizable line bundles on $X$, $\Pic(X)^G$ denotes the $G$-invariant line bundles on $X$, and $\mathrm{Br}([X/G])$ denotes the Brauer group of the quotient stack $[X/G]$ (for further details, see \cite{kresch2025unramified}).
\begin{definition}
    The image of the boundary map $\delta_2$ is called the \emph{Amitsur group} of the pair $(X,G)$, which we denote by $\mathrm{Am}(X,G)$.
\end{definition}
Since the exact sequence \ref{eq:LerayExactSequence} comes from the Leray spectral sequence, it follows that it is contravariantly functorial in both $X$ and $G$. We record some basic properties of the Amitsur group:

\begin{lemma}\label{lem:AmitsurProperties}
    Fix $X$ and $Y$ to be smooth projective $G$-varieties.
    \begin{enumerate}
        \item If the $G$-action on $X$ has a fixed point, then $\mathrm{Am}(X,G) = 0$.
        \item If $X$ and $Y$ are $G$-equivariantly birational, then $\mathrm{Am}(X,G) = \mathrm{Am}(Y,G)$.
        \item Given a $G$-equivariant morphism $X\to Y$, suppose that the induced map $\Pic(Y)\to \Pic(X)$ is injective. Then $\Pic(Y,G)\to \Pic(X,G)$ is injective, and $\mathrm{Am}(Y,G)$ is contained in $\mathrm{Am}(X,G)$.
    \end{enumerate}
\end{lemma}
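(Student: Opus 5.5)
We prove the three parts separately, each time using the exact sequence \eqref{eq:LerayExactSequence} and its contravariant functoriality in $X$.

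\emph{Part (1).} Let $x\in X$ be a $G$-fixed point and $\iota:\{x\}\to X$ its inclusion, which is $G$-equivariant. Functoriality gives a commutative square in which $\delta_2^X$ composed with the identity of $H^2(G,\CC^*)$ equals $\delta_2^{\mathrm{pt}}$ composed with $\iota^*:\Pic(X)^G\to\Pic(\mathrm{pt})^G$. Since $\Pic(\mathrm{pt})=0$, we get $\delta_2^X=0$, so $\mathrm{Am}(X,G)=0$. More concretely, a $G$-invariant line bundle $\mathcal{L}$ yields a projective representation of $G$ on the fiber $\mathcal{L}_x\cong\CC$. The obstruction class is the class of this projective representation, and it vanishes because $\PGL_1$ is trivial.

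\emph{Part (3).} Let $f:X\to Y$ be $G$-equivariant with $f^*:\Pic(Y)\to\Pic(X)$ injective.

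For injectivity of $\Pic(Y,G)\to\Pic(X,G)$, take a linearized bundle $(\mathcal{L},\lambda)$ on $Y$ whose pullback is trivial in $\Pic(X,G)$. Forgetting the linearization, $f^*\mathcal{L}$ is trivial, so $\mathcal{L}\cong\mathcal{O}_Y$ by hypothesis. Linearizations of $\mathcal{O}_Y$ are characters $\chi\in\Hom(G,\CC^*)$, because $Y$ is projective and connected and hence $H^0(\mathcal{O}_Y^*)=\CC^*$. Pullback sends $\chi$ to the same character on $\mathcal{O}_X$, and the map $\Hom(G,\CC^*)\to\Pic(X,G)$ is injective. Hence $\chi$ is trivial, which gives injectivity. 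Alternatively, this is a five-lemma style diagram chase on the two exact sequences, using that the leftmost vertical map is the identity.

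For the Amitsur containment, commutativity $\delta_2^X\circ f^*=\delta_2^Y$ on $\Pic(Y)^G$ gives $\mathrm{Am}(Y,G)=\delta_2^Y(\Pic(Y)^G)\subseteq\delta_2^X(\Pic(X)^G)=\mathrm{Am}(X,G)$. Note that $f^*$ carries $G$-invariant bundles to $G$-invariant bundles. This inclusion holds even without the injectivity hypothesis.

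\emph{Part (2).} This is the main obstacle, because rational maps do not act on $\Pic$ compatibly with the sequence. The plan is to apply $G$-equivariant weak factorization (Abramovich--Karu--Matsuki--W\l odarczyk): $X\dashrightarrow Y$ factors as a chain of blowups and blowdowns along smooth $G$-invariant centers. It therefore suffices to treat a single $G$-equivariant blowup $\pi:\tilde X\to X$ with smooth $G$-invariant center $Z$.

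Here $\Pic(\tilde X)=\pi^*\Pic(X)\oplus\bigoplus_i\ZZ E_i$, where the $E_i$ are the exceptional divisors of the components of $Z$. The pullback $\pi^*$ is injective, so part (3) gives $\mathrm{Am}(X,G)\subseteq\mathrm{Am}(\tilde X,G)$.

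For the reverse inclusion, a $G$-invariant class on $\tilde X$ has the form $\pi^*\mathcal{L}+\sum a_iE_i$, where the coefficients $a_i$ are constant on $G$-orbits of components. Each orbit sum $\mathcal{O}(\sum_{i\in\text{orbit}}E_i)$ is $G$-linearized canonically, since it is the ideal sheaf of a $G$-invariant divisor, so $\delta_2$ kills it. Also $\pi^*\mathcal{L}$ is $G$-invariant exactly when $\mathcal{L}$ is, because $\pi_*\pi^*\mathcal{L}=\mathcal{L}$. Therefore $\delta_2^{\tilde X}(\pi^*\mathcal{L}+\sum a_iE_i)=\delta_2^X(\mathcal{L})$, which gives equality. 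Chaining these equalities along the factorization proves (2).

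If one wishes to avoid weak factorization, an alternative is to resolve $X\dashrightarrow Y$ by a single $G$-equivariant blowup sequence $W\to X$ with $W\to Y$ a morphism. This yields $\mathrm{Am}(X)=\mathrm{Am}(W)\supseteq\mathrm{Am}(Y)$ by part (3), where the blowup injectivity is applied to $W\to Y$ as a birational morphism. Symmetry then gives the reverse inclusion.
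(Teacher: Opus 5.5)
Your proof is correct.

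For part (1) it is essentially the paper's argument, read from the other end of the exact sequence. The paper uses the fixed point to make $H^2(G,\CC^*)\to\mathrm{Br}([X/G])$ injective: it is split by pulling back along the section $BG\to[X/G]$. Exactness then forces $\delta_2=0$. You instead use naturality along $\mathrm{pt}\to X$ to factor $\delta_2^X$ through $\Pic(\mathrm{pt})^G=0$.

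For parts (2) and (3) the paper gives no argument and only cites \cite[Theorem 6.1]{blanc2023finite} and \cite[Lemma 2.1]{kresch2025unramified}. You supply self-contained proofs:
\begin{itemize}
\item For (3), a diagram chase using $H^0(\mathcal{O}^*)=\CC^*$.
\item For (2), a reduction to a single blowup with smooth $G$-invariant center, either by equivariant weak factorization or, more cheaply, by equivariant resolution of indeterminacy plus symmetry. There $\Pic(\tilde X)^G=\pi^*\Pic(X)^G\oplus(\text{orbit sums of exceptional divisors})$, and the orbit sums are linearizable.
\end{itemize}
This is the usual mechanism behind the birational invariance the paper cites, so you are making the cited results explicit rather than replacing them.

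What your route buys is visibility. Your remark that $\mathrm{Am}(Y,G)\subseteq\mathrm{Am}(X,G)$ holds for every $G$-equivariant morphism of connected projective $G$-varieties, with no hypothesis on $\Pic$, is correct: it is just $\delta_2^X\circ f^*=\delta_2^Y$. It makes (1) the special case $\mathrm{pt}\to X$. It also shows that the projection-formula check of injectivity of $f^*$ on $\Pic$ in the proof of \Cref{NoModularKlein} is not needed for the Amitsur inclusion used there.

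Two cosmetic slips:
\begin{itemize}
\item $\mathcal{O}(\sum E_i)$ is the dual of an ideal sheaf, not the ideal sheaf itself.
\item When you split a $G$-invariant class on $\tilde X$, say explicitly that $G$ preserves the decomposition $\pi^*\Pic(X)\oplus\bigoplus\ZZ E_i$, since it permutes the $E_i$. That is what makes the $\pi^*\mathcal{L}$ summand itself $G$-invariant.
\end{itemize}
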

\begin{proof}
    Suppose that $X$ has a $G$-fixed point. By basic functoriality of the Leray spectral sequence, the map from $H^2(G,\CC^*)\to \mathrm{Br}([X/G])$ is injective, thus $\delta_2$ is the zero map and hence $\mathrm{Am}(X,G) = 0$. The second and third parts are the content of \cite[Theorem 6.1]{blanc2023finite} and \cite[Lemma 2.1]{kresch2025unramified}.
\end{proof}

It follows from the definitions that the canonical bundle of a $G$-variety is always $G$-linearizable. The following is an immediate consequence of the fact that the Valentiner $A_6$-action on $\PP^2$ is not the projectivization of a linear action: 

\begin{proposition}\label{prop:Klein-Amitsur-Group}
    The normal form $V_{\Kle}$ has Amitsur group $\mathrm{Am}(V_{\Kle},A_6) = \ZZ/3\ZZ$.
\end{proposition}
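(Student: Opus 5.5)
We compute $\mathrm{Am}(\PP^2,A_6)$ directly from the exact sequence \eqref{eq:LerayExactSequence}. Since $\Pic(\PP^2)=\ZZ\cdot\mathcal{O}(1)$ and every automorphism preserves $\mathcal{O}(1)$, we have $\Pic(\PP^2)^{A_6}=\ZZ$. Hence $\mathrm{Am}(\PP^2,A_6)=\delta_2(\ZZ)$ is the cyclic subgroup of $H^2(A_6,\CC^*)\cong\ZZ/6\ZZ$ generated by $\delta_2(\mathcal{O}(1))$. The plan has three steps: identify this class, find its order, and check the answer against the canonical bundle.

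\emph{Identifying the class.} The obstruction to linearizing $\mathcal{O}(1)$ is the obstruction to lifting the projective representation $A_6\to\PGL_3(\CC)$ to a linear representation on $H^0(\PP^2,\mathcal{O}(1))^\vee=\CC^3$. By the standard description of $\delta_2$ for $\mathcal{O}(1)$ on a projective space, $\delta_2(\mathcal{O}(1))$ is (up to sign) the class of the central extension obtained by pulling back $\SL_3(\CC)\to\PGL_3(\CC)$, or equivalently $\GL_3\to\PGL_3$. By Valentiner and Wiman as recalled in \Cref{KleinsApproach}, this extension is the Valentiner group $\mathcal{V}=3.A_6\subset\SL_3(\CC)$.

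\emph{Finding the order.} The class has order dividing $3$, because the lift lands in $\SL_3$ and the relevant kernel is $\mu_3$. Equivalently, $\mathcal{O}(3)$ is linearizable, and this is consistent with $\omega_{\PP^2}=\mathcal{O}(-3)$ being canonically linearizable, which forces $3\,\delta_2(\mathcal{O}(1))=0$. It remains to show the class is nonzero. If it were zero, the action would lift to a linear representation $A_6\to\GL_3(\CC)$ inducing the Valentiner action. But $A_6$ has no faithful $3$-dimensional representation: its irreducible representations have dimensions $1,5,5,8,8,9,10$, and since $A_6$ is simple, a $3$-dimensional representation is a sum of trivial characters. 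A trivial representation cannot induce the nontrivial action on $\PP^2$. So the class is nonzero, and $\mathrm{Am}(V_{\Kle},A_6)=\ZZ/3\ZZ$.

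\emph{Main obstacle.} The delicate step is justifying that $\delta_2(\mathcal{O}(1))$ really equals the class of the pulled-back extension $\PGL_3$-cocycle. This requires unwinding the Leray boundary map for $[\PP^2/A_6]$. I would argue concretely: a linearization of $\mathcal{O}(1)$ is the same as a lift of the action to $\CC^3$ compatible with scalars, which gives the identification. Alternatively, I would cite the standard computation in \cite{blanc2023finite} for projective spaces.
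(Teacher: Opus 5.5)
Your argument is correct and follows the paper's reasoning. The paper likewise combines the linearizability of $\omega_{\PP^2}=\mathcal{O}(-3)$, which gives $3\,\delta_2(\mathcal{O}(1))=0$, with the fact that the Valentiner action is not the projectivization of a linear action; you justify that last fact explicitly via the character degrees of $A_6$, where the paper only asserts it.

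The step you flag as the main obstacle can also be bypassed. Exactness of \eqref{eq:LerayExactSequence} already gives $\delta_2(\mathcal{O}(1))=0$ if and only if $\mathcal{O}(1)$ is linearizable. A linearization of $\mathcal{O}(1)$ then yields a linear $A_6$-action on $H^0(\PP^2,\mathcal{O}(1))$ that induces the given action on $\PP^2$. So you never need to identify $\delta_2(\mathcal{O}(1))$ with the Valentiner extension class.
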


Given a smooth projective $G$-variety $X$, the behavior of Amitsur groups under restriction to subgroups $H \leq G$ can be quite subtle. The following proposition gives us a condition to check for injectivity on the $p$-primary part: 
\begin{proposition}\label{prop:injectivep-primaryAmitsur}
    Given a smooth projective $G$-variety $X$ and a subgroup $H\leq G$, suppose that the restriction map $H^2(G,\CC^*)_{(p)} \to H^2(H,\CC^*)_{(p)}$ on the $p$-primary part is injective. Then the associated map of Amitsur groups $\mathrm{Am}(X,G) \to \mathrm{Am}(X,H)$ is injective on the $p$-primary part.
\end{proposition}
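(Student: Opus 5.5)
The argument is a diagram chase using the naturality of the exact sequence \eqref{eq:LerayExactSequence} under restriction of the group from $G$ to $H$. Restricting the action gives a commutative square
\[
\begin{tikzcd}
\Pic(X)^G \arrow[r, "\delta_2^G"] \arrow[d, hook] & H^2(G,\CC^*) \arrow[d, "\mathrm{res}"] \\
\Pic(X)^H \arrow[r, "\delta_2^H"] & H^2(H,\CC^*),
\end{tikzcd}
\]
where the left vertical map is the inclusion $\Pic(X)^G \subseteq \Pic(X)^H$. This square comes from the map of quotient stacks $[X/H]\to[X/G]$, together with the compatibility of the Leray (or Hochschild--Serre) spectral sequences with restriction to subgroups. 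In particular, $\mathrm{res}$ carries $\mathrm{Am}(X,G)=\operatorname{im}\delta_2^G$ into $\operatorname{im}\delta_2^H=\mathrm{Am}(X,H)$. The induced map $\mathrm{Am}(X,G)\to\mathrm{Am}(X,H)$ is therefore just the restriction of $\mathrm{res}$ to the subgroup $\mathrm{Am}(X,G)\subseteq H^2(G,\CC^*)$.

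Next we pass to $p$-primary parts. The groups $H^2(G,\CC^*)$ and $H^2(H,\CC^*)$ are finite abelian, since $G$ is finite. A finite abelian group $A$ decomposes canonically as $\bigoplus_p A_{(p)}$, and any homomorphism respects this decomposition. The subgroup $\mathrm{Am}(X,G)$ is finite, and its $p$-primary part is $\mathrm{Am}(X,G)\cap H^2(G,\CC^*)_{(p)}$. On this part the map to $\mathrm{Am}(X,H)$ agrees with $\mathrm{res}_{(p)}:H^2(G,\CC^*)_{(p)}\to H^2(H,\CC^*)_{(p)}$. That map is injective by hypothesis, so its restriction to a subgroup is injective too, which gives the claim.

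The only real step is to justify the commutativity of the square. The rest is elementary. I would argue as follows. The sequence \eqref{eq:LerayExactSequence} is the five-term exact sequence of the spectral sequence $H^p(G,H^q(X,\mathbb{G}_m))\Rightarrow H^{p+q}([X/G],\mathbb{G}_m)$. The morphism $[X/H]\to[X/G]$ induces a morphism of these spectral sequences, namely restriction along $H\to G$ on the group cohomology side and the identity on $H^q(X,\mathbb{G}_m)$. Five-term sequences are natural with respect to morphisms of spectral sequences, so the transgression $d_2$, which is $\delta_2$, commutes with restriction. This is the contravariant functoriality in $G$ that the paper records just before Lemma \ref{lem:AmitsurProperties}, so I would cite it at that point.
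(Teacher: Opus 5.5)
Your proposal is correct and is essentially the paper's argument. The paper also uses the commutative square whose vertical maps are the inclusions $\mathrm{Am}(X,G)_{(p)} \hookrightarrow H^2(G,\CC^*)_{(p)}$ and $\mathrm{Am}(X,H)_{(p)} \hookrightarrow H^2(H,\CC^*)_{(p)}$, and it deduces injectivity of the bottom map from the hypothesis on the top map; you additionally justify the commutativity of that square through the naturality of the Hochschild--Serre spectral sequence under restriction to $H$, which the paper takes for granted from the functoriality remark preceding Lemma~\ref{lem:AmitsurProperties}.
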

\begin{proof}
    The result follows from looking at the commutative diagram
    \begin{center}
        \begin{tikzcd}
{H^2(G,\CC^*)_{(p)}} \arrow[r, hook]               & {H^2(H,\CC^*)_{(p)}}                     \\
{\mathrm{Am}(X,G)_{(p)}} \arrow[r] \arrow[u, hook] & {\mathrm{Am}(X,H)_{(p)}} \arrow[u, hook]
\end{tikzcd}
    \end{center}
    Injectivity of the vertical maps follows from the definition of $\mathrm{Am}(X,G)$, and so the bottom horizontal map is injective. 
\end{proof}

\begin{corollary}\label{cor:AmTrivial}
    Let $\widetilde{\tau}_{1,2,4}$ be the minimal resolution of the complete intersection $\tau_{1,2,4}$. The $3$-primary Amitsur group $\mathrm{Am}(\widetilde{\tau}_{1,2,4},A_6)_{(3)}$ is trivial.
\end{corollary}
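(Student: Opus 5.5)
Proof proposal. The plan is to combine \Cref{prop:injectivep-primaryAmitsur} with part (1) of \Cref{lem:AmitsurProperties}. We look for a subgroup $H \leq A_6$ with two properties: restriction $H^2(A_6,\CC^*)_{(3)} \to H^2(H,\CC^*)_{(3)}$ is injective, and $H$ fixes a point of $\widetilde{\tau}_{1,2,4}$. Given both, $\mathrm{Am}(\widetilde{\tau}_{1,2,4},H) = 0$. \Cref{prop:injectivep-primaryAmitsur} then forces $\mathrm{Am}(\widetilde{\tau}_{1,2,4},A_6)_{(3)}$ to inject into $0$, so it vanishes.

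\textbf{Choice of $H$.} The $3$-part of the Schur multiplier is $\ZZ/3\ZZ$, coming from the Valentiner extension $3.A_6$. Restriction to a Sylow $3$-subgroup $P \cong (\ZZ/3\ZZ)^2$ is injective on $3$-primary parts, by the standard transfer argument ($\mathrm{cor}\circ\mathrm{res}$ is multiplication by the index, which is prime to $3$). Moreover $H^2((\ZZ/3)^2,\CC^*) \cong \ZZ/3$, and the preimage of $P$ in $3.A_6$ is the Heisenberg group of order $27$, which is nonabelian. So the restricted class is nonzero, confirming injectivity. Concretely, take $P = \langle (123),(456)\rangle$.

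\textbf{Fixed point.} The main task is to exhibit a $P$-fixed point on $\widetilde{\tau}_{1,2,4}$. First I will find one on $\tau_{1,2,4} \subset \PP^5$. Points of $\PP^5$ fixed by $(123)$ and $(456)$ include $x = [1:\omega:\omega^2 : a : a\omega' : a\omega'^2]$, where $\omega,\omega'$ are primitive cube roots of unity and $a \in \CC$ is arbitrary; also $[1:1:1:a:a:a]$ and mixed versions.

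Next I check the equations via power sums. For $x = (1,\omega,\omega^2, a, a\omega', a\omega'^2)$ one has $p_1 = 0$ and $p_2 = 0$, so $\sigma_1=\sigma_2=0$. Since $p_3 = 3 + 3a^3$ and $p_4 = p_1 = 0$ on each triple, Newton's identities give $\sigma_4 = (p_1\sigma_3 - \dots)/4$, whose terms all involve $p_1$, $p_2$, or $p_4$. Hence $\sigma_4=0$ for every $a$, and we get a whole $P$-fixed curve in $\tau_{1,2,4}$.

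The obstacle I expect is ensuring this lifts to the resolution. If the fixed point is a smooth point of $\tau_{1,2,4}$, it lifts directly. Otherwise it is one of the 30 $A_1$ nodes; $P$ then acts on the exceptional $\PP^1$ through an abelian group, so some orbit structure still yields a fixed point once the action on the tangent cone is checked. To sidestep this, I will pick a generic $a$ along the fixed curve so that the point avoids the finitely many nodes and lies in the smooth locus. Since $\widetilde{\tau}_{1,2,4}\to\tau_{1,2,4}$ is an isomorphism there, $P$ fixes a point of $\widetilde{\tau}_{1,2,4}$, and the argument concludes.
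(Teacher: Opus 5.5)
Your overall strategy matches the paper's. You restrict to $P=\langle(1\,2\,3),(4\,5\,6)\rangle$ and use injectivity of $H^2(A_6,\CC^*)_{(3)}\to H^2(P,\CC^*)$; your transfer argument for this is fine. Combined with \Cref{prop:injectivep-primaryAmitsur}, it then suffices to kill $\mathrm{Am}(\widetilde{\tau}_{1,2,4},P)$ with a fixed point.

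The gap is in the fixed-point step: the points $x_a=[1:\omega:\omega^2:a:a\omega':a\omega'^2]$ are \emph{not} $P$-fixed for $a\neq 0$. The $3$-cycle $(1\,2\,3)$ multiplies the first three coordinates by a primitive cube root of unity $\zeta$ and leaves the last three unchanged. So it sends $x_a$ to $[\zeta:\zeta\omega:\zeta\omega^2:a:a\omega':a\omega'^2]=x_{\zeta^{-1}a}$, which equals $x_a$ in $\PP^5$ only when $a=0$. Your Newton-identity check that the curve $\{x_a\}$ lies in $\tau_{1,2,4}$ is correct, and the curve is $P$-invariant. But $P$ acts on it by $a\mapsto \zeta a$, fixing only $a=0$ and $a=\infty$. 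In fact the $P$-fixed locus of $\PP^5$ consists of two pieces:
\begin{itemize}
\item the line $\{[b:b:b:c:c:c]\}$, which misses $\tau_{1,2,4}$, since $\sigma_1=0$ forces $[1:1:1:-1:-1:-1]$, where $\sigma_2=-3$;
\item the four points $[1:\omega^{\pm1}:\omega^{\mp1}:0:0:0]$ and $[0:0:0:1:\omega^{\pm1}:\omega^{\mp1}]$.
\end{itemize}
So the $P$-fixed points on $\tau_{1,2,4}$ are just four isolated points. There is no freedom to pick a generic $a$ to avoid the nodes, and your sidestep does not work.

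What you need instead is to show that a specific fixed point, e.g.\ your $a=0$ point $z_0=[1:\omega:\omega^2:0:0:0]$ (the paper's choice), is smooth on $\tau_{1,2,4}$. The paper does this by citing the location of the $30$ nodes. You can also check it directly with the Jacobian criterion. At $z_0$ the gradients of $\sigma_1,\sigma_2,\sigma_4$ are $(1,1,1,1,1,1)$, $-(1,\omega,\omega^2,0,0,0)$ and $(0,0,0,1,1,1)$, which are linearly independent.

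Alternatively, your fallback for a fixed node can be made rigorous, but not for the reason you give. An abelian group acting on $\PP^1$ need not have a fixed point; the Klein four-group is a counterexample. What does work is that $(\ZZ/3\ZZ)^2$ lies in no finite subgroup of $\PGL_2(\CC)$. Hence $P$ would act on the exceptional $\PP^1$ through a cyclic quotient, and a cyclic group has a fixed point there. With either repair, the rest of your argument goes through exactly as in the paper.
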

\begin{proof}
    Let $P \cong (\ZZ/3\ZZ)^2\leq A_6$ denote the Sylow $3$-subgroup of $A_6$. By \cite[Chapter I, Proposition 6.9]{beyl1982group} (see also \cite[Satz V.25.1]{huppert2013endliche}), the restriction 
    $$H^2(A_6,\CC^*)_{(3)} \to H^2(P,\CC^*)_{(3)} \cong \ZZ/3\ZZ$$
    is injective (and in fact an isomorphism). It follows from \Cref{prop:injectivep-primaryAmitsur} that $\mathrm{Am}(\widetilde{\tau}_{1,2,4},A_6)_{(3)} \to \mathrm{Am}(\widetilde{\tau}_{1,2,4},P)_{(3)}$ is injective. To prove the claim, it suffices to check that the Amitsur group $\mathrm{Am}(\widetilde{\tau}_{1,2,4},P)_{(3)} = \mathrm{Am}(\widetilde{\tau}_{1,2,4},P)$ is trivial.

    To prove that $\mathrm{Am}(\widetilde{\tau}_{1,2,4},P) = 0$, we will use \Cref{lem:AmitsurProperties} (1) and show directly that the $P$-action on $\tau_{1,2,4}$ has a smooth fixed point, and hence the $P$-action on $\widetilde{\tau}_{1,2,4}$ has a fixed point. The explicit action of $P$ on $\tau_{1,2,4}$ is given by the group $P = \langle (1 \ 2 \ 3), ( 4 \ 5 \ 6)\rangle$ permuting coordinates in $\PP^5$. A direct calculation shows that the point $z_0 = [1: e^{2\pi i/3}: e^{4\pi i/3} : 0: 0: 0]$ lies on $\tau_{1,2,4}\subset \PP^5$ and is fixed by $P$. Moreover, the point $z_0$ lies outside the singular locus of $\tau_{1,2,4}$ \cite[5.5, Example II]{hirzebruch1981lectures}, and thus lifts to be a smooth global fixed point of the $P$-action on $\widetilde{\tau}_{1,2,4}$. This proves the claim. 
\end{proof}

\begin{theorem}\label{NoModularKlein}
    There exists no $A_6$-equivariant generically finite dominant rational map $V_{\Mod}\to V_{\Kle}$.
\end{theorem}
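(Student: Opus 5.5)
Suppose, for contradiction, that $f\colon V_{\Mod}\dashrightarrow V_{\Kle}=\PP^2$ is an $A_6$-equivariant generically finite dominant rational map. The aim is to contradict \Cref{prop:Klein-Amitsur-Group}, which gives $\mathrm{Am}(\PP^2,A_6)=\ZZ/3\ZZ$, using \Cref{cor:AmTrivial}, which gives $\mathrm{Am}(\widetilde{\tau}_{1,2,4},A_6)_{(3)}=0$. The tool linking them is the functoriality in \Cref{lem:AmitsurProperties}(3).

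First I regularize the map. Compose $f$ with the birational morphism $\widetilde{\tau}_{1,2,4}\to\tau_{1,2,4}$. Then resolve the indeterminacy of the resulting rational map $\widetilde{\tau}_{1,2,4}\dashrightarrow\PP^2$ $A_6$-equivariantly. This can be done by blowing up $A_6$-orbits of points repeatedly, or by taking an equivariant resolution of the closure of the graph. The result is a smooth projective $A_6$-surface $X$, an $A_6$-equivariant birational morphism $X\to\widetilde{\tau}_{1,2,4}$, and an $A_6$-equivariant generically finite surjective morphism $g\colon X\to\PP^2$. By \Cref{lem:AmitsurProperties}(2), $\mathrm{Am}(X,A_6)=\mathrm{Am}(\widetilde{\tau}_{1,2,4},A_6)$, so its $3$-primary part vanishes by \Cref{cor:AmTrivial}.

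Next I check the injectivity hypothesis of \Cref{lem:AmitsurProperties}(3) for $g$. Since $\Pic(\PP^2)=\ZZ\cdot H$, it suffices that $g^*H$ is non-torsion in $\Pic(X)$. This holds because $(g^*H)^2=\deg(g)\,H^2=\deg(g)>0$, using that $g$ is generically finite and surjective. Hence $g^*\colon\Pic(\PP^2)\to\Pic(X)$ is injective, and \Cref{lem:AmitsurProperties}(3) gives an inclusion $\mathrm{Am}(\PP^2,A_6)\subseteq\mathrm{Am}(X,A_6)$ inside $H^2(A_6,\CC^*)\cong\ZZ/6\ZZ$. Because $\mathrm{Am}(\PP^2,A_6)=\ZZ/3\ZZ$ is $3$-primary, it would have to lie in $\mathrm{Am}(X,A_6)_{(3)}=0$, which is a contradiction.

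I expect the main obstacle to be the regularization step. Lemma (3) is stated for morphisms, so I must make sure the equivariant resolution stays smooth and projective and that the birational invariance of (2) applies to $X$. An alternative route to the key containment avoids resolution: a $3$-primary Amitsur class is the obstruction to lifting $\mathcal{O}(1)$ to a linearization, and pulling back shows that $g^*\mathcal{O}(1)$ carries the same nonzero obstruction class in $H^2(A_6,\CC^*)$. Either way, the needed compatibility is that $\delta_2$ is natural, i.e.\ $\delta_2^X(g^*L)=\delta_2^{\PP^2}(L)$, and this follows from the contravariant functoriality of \eqref{eq:LerayExactSequence}.
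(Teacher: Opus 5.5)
Your proposal is correct and follows essentially the same route as the paper: equivariantly regularize to a morphism $g$ from a smooth model of $V_{\Mod}$, check that $g^*$ is injective on $\Pic$, then combine \Cref{lem:AmitsurProperties}(2),(3) with \Cref{cor:AmTrivial} to force $\ZZ/3\ZZ\cong\mathrm{Am}(V_{\Kle},A_6)$ into a trivial $3$-primary group. The only difference is cosmetic: you verify injectivity of $g^*$ on $\Pic$ via $(g^*H)^2=\deg(g)>0$, where the paper cites the projection formula.
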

\begin{proof}
    Suppose for the sake of contradiction that such a map existed. By equivariantly blowing up the locus of indeterminacy and resolving the singularities on $V_{\Mod}$, we get an $A_6$-equivariant dominant morphism $f: \widetilde{V}_{\Mod} \to V_{\Kle}$. Consider the following commutative diagram induced by the exact sequence \ref{eq:LerayExactSequence}:
    \begin{center}
        \begin{tikzcd}
0 \arrow[r] & {\Pic(\widetilde{V}_{\Mod},A_6)} \arrow[r]       & \Pic(\widetilde{V}_{\Mod})^{A_6} \arrow[r]       & {\mathrm{Am}(\widetilde{V}_{\Mod},A_6)} \arrow[r]      & 0 \\
0 \arrow[r] & {\Pic(V_{\Kle},A_6)} \arrow[r] \arrow[u, "f^*"] & \Pic(V_{\Kle})^{A_6} \arrow[r] \arrow[u, "f^*"] & {\mathrm{Am}(V_{\Kle},A_6)} \arrow[u, "f^*"] \arrow[r] & 0
\end{tikzcd}
    \end{center}
    Note that the bottom row is isomorphic to the short exact sequence $0 \to \ZZ \to \ZZ \to \ZZ/3\ZZ \to 0$; this follows since the anticanonical class $-K_{\PP^2} = 3H$ spans $\Pic(V_{\Kle},A_6)$ and $H$ spans $\Pic(V_{\Kle})^{A_6}$ (see \Cref{prop:Klein-Amitsur-Group}). We will use the fact that $\mathrm{Am}(V_{\Kle},A_6) \cong \ZZ/3\ZZ$ to derive a contradiction.
    
    We first observe that the map $f^* : \Pic(V_{\Kle}) \to \Pic(\widetilde{V}_{\Mod})$ is injective, for instance by the projection formula. It follows from \Cref{lem:AmitsurProperties} (3) that the map $f^* : \mathrm{Am}(V_{\Kle},A_6) \to \mathrm{Am}(\widetilde{V}_{\Mod},A_6)$ is injective, and hence injective on the $3$-primary part. By \Cref{lem:AmitsurProperties} (2), $\mathrm{Am}(\widetilde{V}_{\Mod},A_6) = \mathrm{Am}(\widetilde{\tau}_{1,2,4},A_6)$, so by \Cref{cor:AmTrivial}, $\mathrm{Am}(\widetilde{V}_{\Mod},A_6)_{(3)} = \mathrm{Am}(\widetilde{\tau}_{1,2,4},A_6)_{(3)} = 0$. Thus the map on $3$-primary parts is $f^* : \ZZ/3\ZZ \to 0$, which contradicts the injectivity of $f^*$. Thus no such equivariant branched cover $V_{\Mod} \to V_{\Kle}$ existed. 
\end{proof}

\subsection{Obstructing maps from the Hamilton surface}

Recall that Hamilton's normal form is a smooth K3 surface $V_{\Ham} = \tau_{1,2,3} \subseteq \PP^5$. The algebraic surface $\tau_{1,2,3}$ is rather special, as it is equipped with a maximal, effective symplectic $A_6$-action --- this is simply the restriction of the linear $S_6$-permutation action on $\PP^5$. We say that a group $G$ acts \emph{symplectically} on a K3 surface $Y$ if the complex line $H^{2,0}(Y)$ in cohomology is a trivial $G$-representation. The maximality of this symplectic $A_6$-action on $\tau_{1,2,3}$ follows from Mukai's classification of finite symplectic automorphism groups of K3 surfaces, via an examination of their Mathieu representations \cite{mukai1988finite}.

We now use this maximal symplectic $A_6$-action on $\tau_{1,2,3}$ to compute the group $\mathrm{Am}(\tau_{1,2,3},A_6)$, which will in turn obstruct any $A_6$-equivariant branched cover $V_{\Ham} \to V_{\Kle}$. Note that we cannot use fixed points of the Sylow $3$-subgroup $P \leq A_6$ to derive a contradiction like we did for $V_{\Mod}$ --- there are no fixed points on $V_{\Ham} =\tau_{1,2,3}$. Instead, we will directly show that the Amitsur group of $V_{\Ham}$ is trivial.

\begin{proposition}\label{prop:HamiltonInvariantPicard}
    The $A_6$-invariant Picard group $\Pic(V_{\Ham})^{A_6}$ is generated by a primitive effective divisor, namely the restriction of the hyperplane class.
\end{proposition}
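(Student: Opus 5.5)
The plan is to compute the rank of the invariant lattice using the fact that the $A_6$-action on $V_{\Ham}=\tau_{1,2,3}$ is symplectic, which forces $\Pic(V_{\Ham})^{A_6}$ to have rank one. A parity argument on the self-intersection of the hyperplane class then gives primitivity.

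\textbf{Step 1: reduce to cohomology.} Since $V_{\Ham}$ is a K3 surface, $H^1(\mathcal{O})=0$. Hence $\Pic(V_{\Ham})\cong \mathrm{NS}(V_{\Ham})=H^2(V_{\Ham},\ZZ)\cap H^{1,1}$, and this isomorphism is $A_6$-equivariant. It follows that
$$\Pic(V_{\Ham})^{A_6}=H^2(V_{\Ham},\ZZ)^{A_6}\cap H^{1,1}.$$

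\textbf{Step 2: compute $\operatorname{rank} H^2(V_{\Ham},\ZZ)^{A_6}$.} This is the step I expect to need the most care; the obstacle is bookkeeping rather than an idea. I would use Mukai's character formula for finite symplectic actions \cite{mukai1988finite}:
$$\operatorname{rank} H^2(X,\ZZ)^G=\frac{1}{|G|}\sum_{g\in G}\varepsilon(\operatorname{ord} g)-2,\qquad \varepsilon(n)=24\Big/\Big(n\prod_{p\mid n}\big(1+\tfrac1p\big)\Big).$$
The relevant values are $\varepsilon(1)=24$, $\varepsilon(2)=8$, $\varepsilon(3)=6$, $\varepsilon(4)=4$ and $\varepsilon(5)=4$.

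The elements of $A_6$ by order are: $1$ identity, $45$ involutions, $80$ elements of order $3$, $90$ of order $4$ and $144$ of order $5$. The sum is therefore
$$24+45\cdot 8+80\cdot 6+90\cdot 4+144\cdot 4=1800,$$
and dividing by $|A_6|=360$ gives $5$. Hence $\operatorname{rank} H^2(V_{\Ham},\ZZ)^{A_6}=3$. Equivalently, the coinvariant lattice has rank $19$, which matches Mukai's tables for $A_6$.

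\textbf{Step 3: extract the Picard rank.} Because the action is symplectic, $H^{2,0}\oplus H^{0,2}$ consists of invariant classes. Its real part is a $2$-dimensional subspace of $H^2(V_{\Ham},\RR)^{A_6}$, orthogonal to $H^{1,1}$. Therefore $\Pic(V_{\Ham})^{A_6}\otimes\RR$ has rank at most $3-2=1$. On the other hand, the hyperplane class $H$ is $A_6$-invariant because the action is the restriction of a linear action on $\PP^5$, and $H$ is nonzero. So $\Pic(V_{\Ham})^{A_6}\cong\ZZ$, generated by some primitive class $D$ with $H=mD$ for an integer $m\ge1$.

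\textbf{Step 4: primitivity of $H$.} The surface $\tau_{1,2,3}$ is a complete intersection of degrees $1,2,3$, so $H^2=6$. The K3 intersection form is even, so $D^2=6/m^2$ must be an even integer, which forces $m=1$. Thus $H$ is primitive, and it is effective as a hyperplane section. This proves that $\Pic(V_{\Ham})^{A_6}$ is generated by the restriction of the hyperplane class, as claimed.
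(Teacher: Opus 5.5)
Your proof is correct and follows the paper's route. In both arguments the symplectic $A_6$-action gives a rank-$3$ invariant lattice, removing the invariant $(2,0)\oplus(0,2)$ part leaves rank one, and the generator is identified with the primitive hyperplane class $H$. The paper simply cites a lattice table for the rank and for the primitivity of $H$, whereas you verify the rank with Mukai's character formula and get primitivity from $H^2=6$ being squarefree; both verifications check out.
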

\begin{proof}
    Since $A_6$ acts symplectically on $V_{\Ham}$ by permuting coordinates, we can directly see that the $H^2(\tau_{1,2,3},\ZZ)^{A_6}$ is rank $3$ and generated by integral classes of Hodge type $(2,0)$, $(0,2)$, and $(1,1)$; here the $(1,1)$ class is generated by the primitive hyperplane class $H$ restricted to $\tau_{1,2,3}$ (see \cite[Table No. 79d]{brandhorst2021extensions} for the intersection form and more details for this projective K3 surface). Recall that the N\'eron--Severi group and Picard group of a K3 surface coincide, so the $A_6$-invariants coincide. Since $\Pic(V_{\Ham})^{A_6}$ is the orthogonal complement to the type $(2,0)$ and $(0,2)$ parts, we see that $\Pic(V_{\Ham})^{A_6} = \ZZ\langle H\rangle$.
\end{proof}

\begin{corollary}
    The Hamilton surface's Amitsur group is trivial, i.e. $\mathrm{Am}(V_{\Ham},A_6) = 0$.
\end{corollary}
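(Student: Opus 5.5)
By \Cref{prop:HamiltonInvariantPicard}, $\Pic(V_{\Ham})^{A_6} = \ZZ\langle H\rangle$, where $H$ is the restriction of the hyperplane class of $\PP^5$. By the exact sequence \eqref{eq:LerayExactSequence}, the Amitsur group $\mathrm{Am}(V_{\Ham},A_6)$ is the image of $\delta_2$ on $\Pic(V_{\Ham})^{A_6}$. Since $\delta_2$ is a group homomorphism, this image is generated by $\delta_2(H)$. So it suffices to show that $H$ is $A_6$-linearizable, i.e.\ that $H$ lies in the image of $\Pic(V_{\Ham},A_6)\to \Pic(V_{\Ham})^{A_6}$. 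Exactness then gives $\delta_2(H) = 0$, and hence $\mathrm{Am}(V_{\Ham},A_6)=0$.

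To linearize $H$, we use that the $A_6$-action on $V_{\Ham}=\tau_{1,2,3}\subset \PP^5$ is the restriction of the permutation action on $\PP^5$, and this action lifts to a genuine linear action of $A_6$ (even $S_6$) on $\CC^6$ by permutation matrices. A linear action on $\CC^6$ induces a linearization of the tautological bundle $\mathcal{O}_{\PP^5}(-1)$, and hence of its dual $\mathcal{O}_{\PP^5}(1)$. Concretely, $A_6$ acts on the total space of $\mathcal{O}(-1)\subset \PP^5\times\CC^6$ by $g\cdot(\ell,v)=(g\ell,gv)$, which is fiberwise linear and covers the action on $\PP^5$. Since $V_{\Ham}$ is $A_6$-stable, restricting this linearization to $V_{\Ham}$ gives a linearization of $\mathcal{O}_{V_{\Ham}}(1)=H$. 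The same conclusion follows from functoriality of \eqref{eq:LerayExactSequence} along the equivariant inclusion $V_{\Ham}\hookrightarrow \PP^5$: $\mathrm{Am}(\PP^5,A_6)=0$ because the action is the projectivization of a linear one, and $\delta_2(H)$ is the pullback of $\delta_2(\mathcal{O}_{\PP^5}(1))=0$.

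The only point needing care is identifying the generator: we must know that $\Pic(V_{\Ham})^{A_6}$ is generated by $H$ itself and not by some class $D$ with $H=mD$ for some $m>1$. If it were $D$, the vanishing of $\delta_2(H)=m\,\delta_2(D)$ would show only that $\delta_2(D)$ is $m$-torsion, not that it vanishes. This primitivity is exactly the content of \Cref{prop:HamiltonInvariantPicard}, which we take as given. With it, the argument above is complete.
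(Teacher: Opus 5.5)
Your proposal is correct and follows the paper's proof: the generator $H$ of $\Pic(V_{\Ham})^{A_6}$ is $\mathcal{O}_{\PP^5}(1)|_{\tau_{1,2,3}}$, which is linearizable via the permutation representation on $\CC^6$, so $\delta_2$ vanishes and $\mathrm{Am}(V_{\Ham},A_6)=0$. Your explicit linearization of $\mathcal{O}(-1)$ and your remark that primitivity of $H$ is what makes the argument work only spell out what the paper leaves implicit. (Primitivity is in fact automatic: $H^2=6$ and the Picard lattice of a K3 surface is even, so $H$ cannot be a nontrivial multiple.)
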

\begin{proof}
    The line bundle corresponding to the restricted hyperplane class $H$ is $\mathcal{O}_{\tau_{1,2,3}}(1) = \mathcal{O}_{\PP^5}(1)|_{\tau_{1,2,3}}$, and hence linearizable. Thus the map $\varphi: \Pic(\widetilde{V}_{\Ham},A_6) \to \Pic(\widetilde{V}_{\Ham})^{A_6}$ is an isomorphism, proving the claim (the Amitsur group is the cokernel of the map $\varphi$).
\end{proof}

\begin{theorem}\label{NoKleinHamilton}
There is no $A_6$-equivariant generically finite dominant rational map $V_{\Ham} \to V_{\Kle}$.
\end{theorem}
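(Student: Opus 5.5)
We argue exactly as in the proof of \Cref{NoModularKlein}, with the vanishing of the Amitsur group of the Hamilton surface replacing \Cref{cor:AmTrivial}. Suppose for contradiction that an $A_6$-equivariant generically finite dominant rational map $g: V_{\Ham} \dashrightarrow V_{\Kle}$ exists. By equivariantly blowing up the indeterminacy locus of $g$, we obtain a smooth projective $A_6$-surface $\widetilde{V}_{\Ham}$. This surface is $A_6$-equivariantly birational to $V_{\Ham}$ and carries an $A_6$-equivariant dominant morphism $f: \widetilde{V}_{\Ham} \to V_{\Kle}$. Since $V_{\Ham}$ is already smooth, no further resolution is needed.

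First, we compute the Amitsur group of the source. By \Cref{lem:AmitsurProperties} (2) and the preceding corollary,
$$\mathrm{Am}(\widetilde{V}_{\Ham},A_6) = \mathrm{Am}(V_{\Ham},A_6) = 0.$$
Second, we show that $f^*: \Pic(V_{\Kle}) \to \Pic(\widetilde{V}_{\Ham})$ is injective. Since $\Pic(\PP^2) = \ZZ H$, it suffices to show $f^*H$ is nontrivial. As $f$ is generically finite and dominant, $(f^*H)^2 = \deg(f)\, H^2 = \deg(f) > 0$ by the projection formula, so $f^*H \neq 0$.

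Third, \Cref{lem:AmitsurProperties} (3) now gives an injection
$$\mathrm{Am}(V_{\Kle},A_6) \hookrightarrow \mathrm{Am}(\widetilde{V}_{\Ham},A_6) = 0.$$
By \Cref{prop:Klein-Amitsur-Group}, however, $\mathrm{Am}(V_{\Kle},A_6) \cong \ZZ/3\ZZ$, which is a contradiction. We can also phrase this via the commutative ladder of the exact sequences \eqref{eq:LerayExactSequence} used in the proof of \Cref{NoModularKlein}. The generator $H$ of $\Pic(V_{\Kle})^{A_6}$ maps to the nonzero class in $\ZZ/3\ZZ$. On the other hand, $f^*H$ is an $A_6$-invariant class on $\widetilde{V}_{\Ham}$, and every such class is linearizable. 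By functoriality, the image of $H$ in $H^2(A_6,\CC^*)$ would then have to vanish. This is impossible, because the map $\Pic(V_{\Kle})^{A_6}\to H^2(A_6,\CC^*)$ factors compatibly through both rows.

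The step I expect to need the most care is the identification $\mathrm{Am}(\widetilde{V}_{\Ham},A_6) = \mathrm{Am}(V_{\Ham},A_6)$ after blowing up. An equivariant blowup adds exceptional classes to $\Pic$, and one must check that these do not create new non-linearizable invariant classes. This is exactly what the birational invariance in \Cref{lem:AmitsurProperties} (2) guarantees. Concretely, the exceptional curves over an $A_6$-orbit of points come with the natural linearization of $\mathcal{O}(E)$. The invariant Picard group of $\widetilde{V}_{\Ham}$ is then generated by the pullback of $\mathcal{O}_{\PP^5}(1)|_{\tau_{1,2,3}}$, which is linearizable by the previous corollary, together with these orbit sums of exceptional classes. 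If one prefers to avoid citing \cite{blanc2023finite}, this direct check confirms that the Amitsur group of $\widetilde{V}_{\Ham}$ still vanishes.
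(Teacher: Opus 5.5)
Your proposal is correct and follows the paper's argument: an equivariant branched cover would force $\ZZ/3\ZZ \cong \mathrm{Am}(V_{\Kle},A_6) \hookrightarrow \mathrm{Am}(V_{\Ham},A_6) = 0$, which is absurd. You are more careful than the paper about resolving indeterminacy and invoking birational invariance of the Amitsur group, and your direct check via orbit sums of exceptional curves is valid; note also that injectivity of $f^*$ on $\Pic$ is not actually needed, since $\delta_2 \circ f^* = \delta_2$ already gives the inclusion of Amitsur groups for any equivariant morphism.
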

\begin{proof}
    Just as in the proof of \Cref{NoModularKlein}, we proceed by contradiction. Since the supposed $A_6$-equivariant branched cover $V_{\Ham} \to V_{\Kle}$ would induce an injective map on Picard groups, we would have an inclusion on Amitsur groups 
    $$\ZZ/3\ZZ \cong \mathrm{Am}(V_{\Kle},A_6) \hookrightarrow \mathrm{Am}(V_{\Ham},A_6) = 0,$$ which is a contradiction. Thus no such equivariant branched cover could have existed.
\end{proof}

\begin{proof}[Proof of \Cref{MainTheorem}]
    This is just a synthesis of the results shown in \Cref{NoModularHamilton}, \Cref{NoModularKlein}, and \Cref{NoKleinHamilton}.
\end{proof}

\section{Solutions via Hilbert modular forms}\label{Section5}

What is presented in this section is implicit the literature in some form. As far as I am aware, the role that uniformization by automorphic forms plays in solutions to the general sextic is not emphasized anywhere in the modern literature except in \cite{farb2023modular}. Even so, no explicit analytic formula for the roots of a sextic is given in \textit{loc. cit.} --- in this sense, this section makes explicit the root finding method proposed in \cite[Proposition 4.14 (3)]{farb2023modular}.

\subsection{Reviewing the modular solution}

In order to explicitly solve for the roots of a general sextic, let us review the last step in the modular solution. At this point, we have used Tschirnhaus transformations to solvably reduce a general sextic to one of the form
\begin{equation}\label{ModularNormalForm}
    z^6 + az^3 +bz + c = 0.
\end{equation}

Given any $\lambda \in \CC^*$, we identify this general form with the equation $(\lambda z)^6 + a(\lambda z)^3 +b\lambda z + c = 0$; this replaces the coefficients $a$ with $a/\lambda^3$, $b$ with $b/\lambda^5$, and $c$ with $c/\lambda^6$. This equivalence class of equations is then represented by a point
$$\left[a^{10}:b^6:c^5\right] \in \PP^2.$$
The modular normal form $V_{\Mod} = \tau_{1,2,4}$ is the space of roots for sextics of the form given in \Cref{ModularNormalForm}. We identify the coefficients $(a,b,c)$ with the elementary symmetric polynomials $\sigma_3$, $\sigma_5$, and $\sigma_6$ in the roots $p=(z_0 ,\dots,z_5)$. By the above discussion, the natural branched cover $\pi: V_{\Mod} \to \PP^2$ is given by the morphism
$$\pi(p) = [\sigma_3(p)^{10}:\sigma_5(p)^6 : \sigma_6(p)^5].$$
To explicitly solve for the roots of \Cref{ModularNormalForm}, we must build a (multi-valued) inverse of the function $\pi$. To do this, we will leverage the birational equivalence of $V_{\Mod}$ to a Hilbert modular surface.

\subsection{Uniformization by Hilbert modular forms}

A lattice $\Gamma \leq \Aut(\HH^2\times \HH^2)$ is called a \emph{Hilbert modular group}; the associated locally symmetric space $\Gamma \backslash \HH^2 \times \HH^2$ is called a \emph{Hilbert modular surface}. One of the main examples we will be interested in is the Hilbert modular group $\Gamma_5 = \SL_2(\mathcal{O}_{\QQ(\sqrt{5})})$ whose defining number field is $K=\QQ(\sqrt{5})$. The group $\Gamma_5$ acts holomorphically on $\HH^2 \times \HH^2$ by linear fractional transformations like so:
$$\begin{pmatrix*}
    a & b\\
    c & d
\end{pmatrix*} \cdot (z_1,z_2) = \left(\frac{az_1 +b}{cz_1+d}, \frac{a'z_2 +b'}{c'z_2+d'}\right),$$
where $(-)'$ denotes the Galois conjugate of the matrix entries in $\RR$.

Given a Hilbert modular surface $X= \Gamma \backslash \HH^2 \times \HH^2$ with defining number field $K$, the general theory of Baily--Borel guarantees that $X$ admits a compactification with the structure of a projective variety. Such a compactification is built by adding in finitely many \emph{cusps}, which are equivalence classes of points on $\PP^1(K)$ under the (extended) linear fractional $\Gamma$-action. Explicit projective embeddings of these varieties can be built using Hilbert modular forms, whose definition we now recall:

\begin{definition}
    Let $f: \HH^2\times \HH^2 \to \CC$ be a holomorphic function. We say that $f$ is a \emph{Hilbert modular form of weight $k$} on $\Gamma$ if $f$ satisfies the following transformation law under the $\Gamma$-action:
    $$f\left(\begin{pmatrix*}
    a & b\\
    c & d
\end{pmatrix*} \cdot (z_1,z_2)\right) = (cz_1+d)^k(c'z_2+d')^k f(z_1,z_2).$$
If $f$ vanishes at all of the cusps of $\Gamma \backslash \HH^2\times \HH^2$ (meaning a Fourier expansion of $f$ in a neighborhood of any cusp has vanishing constant term), we say that $f$ is a \emph{cusp form}. The space of weight $k$ cusp forms on $\Gamma$ is denoted by $S_k(\Gamma)$.
\end{definition}

Recall from Proposition \ref{vanderGeerBirational} that the normal form $V_{\Mod}=\tau_{1,2,4}$ is birational to the Hilbert modular surface $\Gamma_5[3]\backslash \HH^2\times \HH^2$, where
$$\Gamma_{5}[3] = \ker \left(\SL_{2}\left(\ZZ\left[\frac{1+\sqrt{5}}{2}\right]\right)\rightarrow \PSL_2(\FF_9)\right).$$
Note that $\Gamma_5 = \SL_2(\mathcal{O}_{\QQ(\sqrt{5})})$ is the source of this homomorphism. This quotient corresponds to the $A_6 \cong \PSL_2(\FF_9)$ congruence cover
\begin{equation}\label{CongruenceCover}
    \Gamma_5[3]\backslash \HH^2\times \HH^2 \to \Gamma_5\backslash \HH^2\times \HH^2 \simeq \PP^2.
\end{equation}
This cover is birationally equivalent to the $A_6$-covering that appears in the last step of the modular solution $\tau_{1,2,4} \to \tau_{1,2,4}/A_6$ \cite[Proposition 4.14 (3)]{farb2023modular}. Following the trail left by Green, it follows that the uniformization of the surface $V_{\Mod}$ by $\HH^2\times \HH^2$ would, in principle, offer solutions to generic sextics by Hilbert modular forms. We now make this relationship explicit.

Just as in \cite[Section 5]{green1978analytic}, the diagram we wish to fill in is as follows:

\begin{center}
    \begin{tikzcd}
\HH^2 \times\HH^2 \arrow[d,"\mu"] \arrow[rrd, "{(\psi_0,\dots,\psi_5)}"] \arrow[dd, bend right=49,"\Phi"']&                          &       \\
V_{\Mod} \arrow[d,"\pi"] \arrow[r,"="]                                      & {\tau_{1,2,4}} \arrow[r] & \PP^4\subset \PP^5\\
V_{\Mod}/A_6
\end{tikzcd}
\end{center}

Here the symbols $\psi_i$ denote the desired modular forms on $\HH^2\times \HH^2$ which will uniformize the normal form $V_{\Mod}=\tau_{1,2,4}$. These forms birationally identify $\Gamma_5[3]\backslash \HH^2 \times \HH^2$ with the variety $\tau_{1,2,4}$ in $\PP^5$ and will generate analytic solutions to the sextic. As discussed before, the canonical map lands in a hyperplane section $\PP^4\subset \PP^5$ determined by $\sigma_1$, and is given in terms of the holomorphic $2$-forms on $\tau_{1,2,4}$. In other words, this embedding will be equivalent to the one given by the canonical linear system on $V_{\Mod}$.

The complete intersection $V_{\Mod}=\tau_{1,2,4}$ embeds in $\PP^4\subset \PP^5$ by explicit equations, namely the common zero locus of the symmetric polynomials $\sigma_1$, $\sigma_2$, and $\sigma_4$. The image of the weight $2$ cusp forms $[s_0: \dots :s_5 ]: \HH^2 \times \HH^2 \rightarrow \PP^5$ on $\Gamma_5[3]$ lies within $\tau_{1,2,4}$, i.e. they satisfy the relations given by the same symmetric polynomials, and moreover they define a degree $1$ dominant morphism onto its image \cite[p. 193]{van1988hilbert}. Thus our desired Hilbert modular forms are $(\psi_0,\dots,\psi_5) = (s_0,\dots,s_5)$. The map 
\begin{align*}
    \varphi: S_2(\Gamma_5[3]) &\longrightarrow H^0(V_{\Mod},\Omega^2)\\
    f &\longmapsto fdz_1\wedge dz_2
\end{align*}
identifies the space of parallel weight $2$ cusp forms $S_2(\Gamma_5[3])$ with holomorphic $2$-forms $H^0(V_{\Mod},\Omega^2)$ \cite[Chapter III, Proposition 3.7]{van1988hilbert}. Once one calculates a basis $s_1',\dots,s_5'$ for the space of holomorphic $2$-forms on $V_{\Mod}$ via the Poincar\'e residue formula, and defines $s_0' = -\sum s_i'$, the corresponding cusp forms $s_i=\varphi^{-1}(s_i')$ will give an explicit analytic formula for points on the projectively embedded normal form $V_{\Mod} \subset \PP^5$.

To solve for the roots of a sextic in the form given by \Cref{ModularNormalForm}, we need only invert the projection $\pi$. However, we could also invert the full uniformization map $$\Phi = \pi \circ \mu: \HH^2 \times \HH^2 \to \PP^2$$ to solve for the roots as well: given the (multi-valued) inverse function $\Phi^{-1}$ and the cusp forms $s_0\dots,s_5$, the roots of our sextic would be given by $s_i(\Phi^{-1}([a^{10},b^6,c^5]))$ for $i = 0,\dots,5$. We now focus our attention on explicitly inverting $\Phi: \HH^2 \times\HH^2 \to \PP^2$.

\subsection{The uniformizing differential equation}
The congruence cover \ref{CongruenceCover} gives us an explicit quotient group structure to leverage when inverting $\Phi$ --- the map $\Phi$ is birationally equivalent to the uniformization $\HH^2 \times \HH^2 \to \Gamma_5 \backslash\HH^2 \times \HH^2$. Rationality of $\Gamma_5 \backslash\HH^2 \times \HH^2$, along with a basis for its function field in terms of modular forms, was worked out by Gundlach \cite{gundlach1963bestimmung}. In what follows, we denote the $\Gamma_5$-quotient by $X_5 = \Gamma_5 \backslash \HH^2 \times \HH^2$.

In the quintic case, Green produces an inverse $\Phi^{-1}$ by identifying it with the solution to the Schwarzian differential equation, the so-called \emph{uniformizing differential equation} \cite[Section 6]{green1978analytic}. For the case of sextics, we must consider the analogous uniformizing differential equation of Hilbert modular surfaces developed by Sasaki--Yoshida \cite{sasakiyoshidaI,sasakiyoshidaII} and expanded on by Sato \cite{sato1991uniformizing}.

Identify $\HH^2 \times\HH^2$ with an open domain in a smooth quadric $\PP^1 \times \PP^1$ embedded in $\PP^3$. This allows us to assign the coordinates $(z_1,z_2) \in \HH^2 \times \HH^2$ to the projective coordinates $[1:z_1:z_2:z_1z_2] \in \PP^3$ satisfying the quadric condition. We pick local coordinates $(x,y)\in X_5$. Following \cite{sasakiyoshidaII}, the uniformizing equation of the Hilbert modular surface $X_5$ is given by the system of linear differential equations
\begin{align*}
    \frac{\partial^2 z}{\partial x^2} &= \ell \frac{\partial^2 z}{\partial x \partial y} + a\frac{\partial z}{\partial x } + b\frac{\partial z}{\partial y} + pz\\
    \frac{\partial^2 z}{\partial y^2} &= m \frac{\partial^2 z}{\partial x \partial y} + c\frac{\partial z}{\partial x } + d\frac{\partial z}{\partial y} + q z
\end{align*}
where $z$ is unknown and the set of coefficients $\{\ell,m,a,b,c,d,p,q\}$ are determined by the holomorphic structure on $X_5$ and various integrability conditions (the coefficients are explicitly given below). The space of solutions is dimension 4 (see \cite[Proposition 1.4]{sasakiyoshidaII}). A basis of solutions $z_0, \dots,z_3$ to the uniformizing differential equation will define the inverse $\Phi^{-1}$ like so:
\begin{align*}
    X_5 &\longrightarrow \HH^2\times \HH^2 \subset \PP^3\\
    (x,y) &\longmapsto [z_0(x,y):\dots:z_3(x,y)]
\end{align*}
Thus to produce the inverse function $\Phi^{-1}$, it suffices to determine the coefficients of the uniformizing differential equation associated to $X_5$. Thankfully, almost all of this work was accomplished by Sato \cite[Example 4]{sato1991uniformizing}, and corrected by Nagano \cite[Theorem 6.1, Remark 6.2]{nagano2012period}.

Let $\tau : (z_1,z_2) \mapsto (z_2,z_1)$ denote the involution of $\HH^2 \times \HH^2$ exchanging coordinates. The discrete group $\hat{\Gamma} _5= \langle \Gamma_5, \tau \rangle$ is also a Hilbert modular group. The associated quotient space, which we denote by $\hat{X}_5 = \hat{\Gamma}_5 \backslash \HH^2 \times \HH^2$, admits a degree $2$ orbifold covering by $X_5$, where the deck group action is induced by the coordinate exchange map $\langle \Bar{\tau}\rangle \cong \hat{\Gamma}_5 / \Gamma_5$. The space $\hat{X}_5$ was studied extensively by Gundlach \cite{gundlach1963bestimmung}, Hirzebruch \cite{hirzebruch2006ring}, M\"uller \cite{muller1983hilbertsche}, and others; as previously mentioned, the uniformizing differential equation was determined by Sato \cite{sato1991uniformizing} and Nagano \cite{nagano2012period}. We will mildly abuse notation and let $\Bar{\tau}$ denote the covering map
$$\Bar{\tau} : X_5 \to \hat{X}_5.$$
Due to work of Klein \cite{klein1884vorlesungen}, Hirzebruch \cite{hirzebruch2006ring}, and Kobayashi--Kushibiki--Naruki \cite{kobayashi1989polygons}, the Hilbert modular surface $\hat{X}_5$ is isomorphic to the weighted projective space $\PP(1,3,5)$ generated by Klein's icosahedral polynomials $\mathfrak{A}$, $\mathfrak{B}$, and $\mathfrak{C}$ of degrees $2$, $6$, and $10$, respectively (for a nice overview of these results, see \cite[pp. 820--821]{nagano2013theta}). We let $(x,y)$ denote the local coordinates on $\hat{X}_5$ where $\{\mathfrak{U}\neq 0\}$, i.e.
$$(x,y) = \left(\frac{\mathfrak{B}}{\mathfrak{A}^3},\frac{\mathfrak{C}}{\mathfrak{A}^5}\right).$$
Nagano calculated the (corrected) normalization factor of the differential equation associated to the Hilbert modular surface $\hat{X}_5$: it is given by the equation
$$e^{2\theta} = \frac{-36x^2 + 32x + y}{y^{1/2}(1728x^5 - 720x^3y + 80xy^2 - 64(5x^2 - y)^2 - y^3)^{3/2}}.$$
By construction, the normalization factor is invariant under the group $\hat{\Gamma}_5$, and therefore is also $\Gamma_5$-invariant. With this normalization factor, the coefficients of the uniformizing differential equation on $\hat{X}_5$ are given as follows (see \cite[Theorem 6.1, Remark 6.2]{nagano2012period}):
\begin{align*}
    \ell &= \frac{-20(4x^2 + 3xy - 4y)}{36x^2 - 32x - y}, \qquad m = \frac{-2(54x^3 - 50x^2 - 3xy + 2y)}{5y(36x^2 - 32x - y)},\\
    a &= \frac{-20(3x - 2)}{36x^2 - 32x - y}, \qquad b = \frac{-10(8x + 3y)}{36x^2 - 32x - y},\\ 
    c &= \frac{3x - 2}{5y(36x^2 - 32x - y)}, \qquad d = \frac{-198x
^2 + 180x + 7y}{5y(36x^2 - 32x - y)},\\
    p &= \frac{-3}{36x^2 - 32x - y}, \qquad q = \frac{3}{100y(36x^2 - 32x - y)}.
\end{align*}
Thus, up to the $\langle \tau \rangle$-orbit ambiguity in $\HH^2 \times \HH^2$, our desired (multi-valued) inverse map
$$\Phi^{-1}: X_5 \xrightarrow[]{\bar{\tau}} \hat{X}_5 \to \HH^2\times \HH^2$$
is given by a basis of solutions to the uniformizing differential equation of $\hat{X}_5$ with these prescribed coefficients.

To summarize, an analytic formula for the roots of the generic type $(1,2,4)$ reduced sextic $p(z)= z^6 +az^3 + bz + c $ is given by the following steps:
\begin{enumerate}
    \item Solve the uniformizing differential equation for the multi-valued inverse $\Phi^{-1}:X_5 \to \HH^2 \times \HH^2$ and write down the Hilbert modular cusp forms $s_0,s_1,\dots,s_5$ on $\Gamma_5[3]\backslash \HH^2 \times \HH^2$ using the Poincar\'e residue formula on $\tau_{1,2,4}$.
    \item Take the point $q= [a^{10}:b^6: c^5]$ on the projective space $X_5 = \PP^2$ associated to the given sextic $p(z)$. Given a point in the image of the inverse map $\Phi^{-1}(q)$, evaluate it on each cusp form $s_i$ for $i = 0,\dots, 5$: these are the roots of the sextic $p(z)$.
\end{enumerate}

In a similar vein, one should be able to give an explicit analytic formula for the roots of a sextic in Hamilton's or Klein's normal form. This leads us to pose the following problem:

\begin{problem}
    Build a uniformization of Hamilton's normal form to solve for the roots of a general sextic using automorphic forms. Do the same for Klein's normal form.
\end{problem}

\printbibliography

\Addresses
\end{document}